\newcommand{\into}{\hookrightarrow}
\newcommand{\abs}[1]{\left\lvert#1\right\rvert}
\newcommand{\Z}{\ensuremath{\mathbb{Z}}}
\newcommand{\R}{\ensuremath{\mathbb{R}}}
\newcommand{\C}{\ensuremath{\mathbb{C}}}
\newcommand{\Q}{\ensuremath{\mathbb{Q}}}
\newcommand{\A}{\ensuremath{\mathbb{A}}}
\newcommand{\M}{\mathcal{M}}
\newcommand{\Mbar}{\overline{M}}
\renewcommand{\P}{\ensuremath{\mathbb{P}}}
\renewcommand{\bar}[1]{\overline{#1}}
\DeclareMathOperator{\trop}{trop}
\DeclareMathOperator{\Id}{Id}
\DeclareMathOperator{\PGL}{PGL}
\DeclareMathOperator{\pr}{pr}
\DeclareMathOperator{\ev}{ev}
\newcommand{\margincolor}{red}      
\definecolor{darkgreen}{rgb}{0,0.7,0}
\newcounter{margincounter}
\newcommand{\marginnum}{
\ifnum\value{margincounter}<10
\textcolor{\margincolor}{\begin{picture}(0,0)\put(2.2,2.4){\circle{9}}\end{picture}\footnotesize\arabic{margincounter}}
\else\ifnum\value{margincounter}<100
\textcolor{\margincolor}{\begin{picture}(0,0)\put(4.256,2.5){\circle{11}}\end{picture}\footnotesize\arabic{margincounter}}
\else
\textcolor{\margincolor}{\begin{picture}(0,0)\put(6.8,2.5){\circle{14}}\end{picture}\footnotesize\arabic{margincounter}}
\fi\fi
}
\theoremstyle{plain}
\newtheorem{theorem}{Theorem}
\numberwithin{theorem}{section}
\newtheorem{thm}[theorem]{Theorem}
\newtheorem*{thm*}{Theorem}
\newtheorem{prop}[theorem]{Proposition}
\newtheorem{cor}[theorem]{Corollary}
\newtheorem{lem}[theorem]{Lemma}
\newtheorem{claim}[theorem]{Claim}
\theoremstyle{definition}
\theoremstyle{remark}
\newtheorem{rem}[theorem]{Remark}
\newtheorem{ex}[theorem]{Example}
\newtheorem{question}[theorem]{Question}
\DeclareMathOperator{\CR}{CR}
\def\subsubsection{\@startsection{subsubsection}{3}%
  \z@{.5\linespacing\@plus.7\linespacing}{-.5em}%
  {\normalfont\bfseries}}
\begin{document}
\title{Cross-ratio degrees and perfect matchings}
\subjclass[2010]{14N10, 14H10, 14T15, 05C30}
\date{\today}
\author{Rob Silversmith}

\begin{abstract}
  \emph{Cross-ratio degrees} count configurations of points
  $z_1,\ldots,z_n\in\P^1$ satisfying $n-3$ cross-ratio constraints, up
  to isomorphism. These numbers arise in multiple contexts in
  algebraic and tropical geometry, and may be viewed as combinatorial
  invariants of certain hypergraphs. We prove an upper bound on
  cross-ratio degrees in terms of the theory of perfect matchings on
  bipartite graphs. We also discuss several of the many perspectives
  on cross-ratio degrees --- including a connection to Gromov-Witten
  theory --- and give many example computations.
\end{abstract}
\maketitle

\section{Introduction}\label{sec:Intro}

\subsection{Cross-ratio degrees and main result} Recall that if
$z_1,z_2,z_3,z_4$ are distinct points on the projective line $\P^1,$
their \emph{cross-ratio} is
$$\CR(z_1,z_2,z_3,z_4)=\frac{(z_3-z_1)(z_4-z_2)}{(z_3-z_2)(z_4-z_1)}\in\C\setminus\{0,1\},$$
extended as usual in case one of $z_1,z_2,z_3,z_4$ is $\infty.$ This
paper is concerned with counting configurations of points in $\P^1$
satisfying constraints on their cross-ratios, as follows. Let $n\ge4$,
and for each $i=1,\ldots,n-3$, fix a 4-tuple
$e_i=(v_{i,1},\ldots,v_{i,4})$ of distinct elements of
$\{1,\ldots,n\}.$ Write $\mathcal{T}=(e_1,\ldots,e_{n-3}).$ Then given
distinct points $z_1,\ldots,z_n\in\P^1,$ we obtain a tuple of
cross-ratios:
$$\CR_{\mathcal{T}}(z_1,\ldots,z_n)=\left(\CR(z_{v_{1,1}},z_{v_{1,2}},z_{v_{1,3}},z_{v_{1,4}}),\ldots,\CR(z_{v_{n-3,1}},z_{v_{n-3,2}},z_{v_{n-3,3}},z_{v_{n-3,4}})\right)\in(\C\setminus\{0,1\})^{n-3}.$$
Recall that $M_{0,n}$ is the $(n-3)$-dimensional variety that
parametrizes $n$-tuples $(z_1,\ldots,z_n)\in\P^1$ of distinct points,
up to M\"obius transformation. As cross-ratios are invariant under
M\"obius transformations, $\CR_{\mathcal{T}}$ defines a map
$M_{0,n}\to(\C\setminus\{0,1\})^{n-3}.$ 
Since $\dim(M_{0,n})=\dim((\C\setminus\{0,1\})^{n-3})=n-3,$
$\CR_{\mathcal{T}}$ has a well-defined degree $d_{\mathcal{T}}$, which
we call the \textbf{\textit{cross-ratio degree}} of $\mathcal{T}$, a
nonnegative integer. Our main result, Theorem \ref{thm:Matchings} just
below, gives an upper bound for $d_{\mathcal{T}}$ in terms of matching
theory on bipartite graphs.

Permuting the 4 elements of $e_i$ corresponds to post-composing
$\CR_{\mathcal{T}}$ by a change of coordinates on the $i$-th copy of
$\C\setminus\{0,1\}$, which does not affect $d_{\mathcal{T}}.$
Furthermore, permuting the sets $e_i$ corresponds to post-composing
$\CR_{\mathcal{T}}$ by the action of $S_{n-3}$ on
$(\C\setminus\{0,1\})^{n-3}$; this again does not affect
$d_{\mathcal{T}}.$ Finally, renaming the elements of $\{1,\ldots,n\}$
clearly does not affect $d_{\mathcal{T}}.$ These observations show
that $d_{\mathcal{T}}$ depends only on the isomorphism class of
$\mathcal{T}=(V,E)$ as a hypergraph with vertices $V$ and hyperedges
$E$, where $\abs{V}=n,$ $\abs{E}=n-3,$ and $\abs{e}=4$ for all
$e\in E.$ (We say $\mathcal{T}$ is a \emph{4-uniform} hypergraph.)

Cross-ratio degrees may be computed individually in many ways, see
Section \ref{sec:AlternateMethods} just below. Perhaps the most
notable of these is a recursion due to Goldner (\cite{Goldner2021},
see Section \ref{sec:Goldner}), from which any cross-ratio degree
can be recovered from the base case $d_{\{\{1,2,3,4\}\}}=1$. It is
desirable, and more challenging, to compute $d_{\mathcal{T}}$ in a way
that indicates a clear combinatorial relationship with the structure
of the hypergraph $\mathcal{T}$.

Recall that a hypergraph is exactly characterized by its bipartite
incidence graph $G_{\mathcal{T}}=(V\sqcup E,I)$, i.e. the bipartite
graph whose vertices are $V\sqcup E,$ and whose edges are the
\emph{incidence set} $I=\{(v,e)\in V\times E:v\in e\}.$ Recall that a
\emph{perfect matching} in a bipartite graph $G=(A\sqcup B,I)$ is a
bijection $f:A\to B$ with $(a,f(a))\in I$ for all $a\in A.$ We write
$P(G)$ for the number of perfect matchings in $G$.
\begin{thm}\label{thm:Matchings}
  Let $\mathcal{T}$ be a 4-uniform hypergraph with $n$ vertices and
  $n-3$ hyperedges. Then for any three distinct vertices
  $\{v_1,v_2,v_3\}\subset V,$ the cross-ratio degree of $\mathcal{T}$
  satisfies
  $$d_{\mathcal{T}}\le P(G_{\mathcal{T}}-\{v_1,v_2,v_3\}),$$ where
  $G_{\mathcal{T}}-\{v_1,v_2,v_3\}$ is the bipartite graph obtained by
  deleting $v_1,v_2,v_3$.
\end{thm}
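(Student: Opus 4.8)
The plan is to normalize three of the points via the $\PGL_2$-action---which turns the cross-ratio equations into a \emph{multiaffine} polynomial system (of degree at most one in each variable)---and then to bound its number of solutions by a mixed volume of coordinate boxes, that mixed volume being a permanent and hence a count of perfect matchings. In detail: since $\PGL_2$ acts simply transitively on triples of distinct points of $\P^1$, every point of $M_{0,n}$ has a unique representative with $z_{v_1}=0$, $z_{v_2}=1$, $z_{v_3}=\infty$, which identifies $M_{0,n}$ with the open subset $U\subseteq(\C^*)^{n-3}$ of tuples $(z_v)_{v\in V\setminus\{v_1,v_2,v_3\}}$ having pairwise distinct entries, all different from $1$. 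Fix a generic $\lambda=(\lambda_1,\dots,\lambda_{n-3})$, so that $d_{\mathcal T}=\#\,\CR_{\mathcal T}^{-1}(\lambda)$ (and if $\CR_{\mathcal T}$ is not dominant then $d_{\mathcal T}=0$ and there is nothing to prove). Clearing denominators in the $i$-th equation $\CR(z_{v_{i,1}},z_{v_{i,2}},z_{v_{i,3}},z_{v_{i,4}})=\lambda_i$ yields a polynomial $f_i$ in the $z_v$; a direct expansion---with the cases $v_3\in e_i$ (where $z_{v_3}=\infty$ makes $f_i$ affine-linear) and $v_3\notin e_i$ treated separately---shows that $f_i$ is multiaffine and involves only the variables indexed by $e_i\setminus\{v_1,v_2,v_3\}$. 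Hence $\mathrm{Newt}(f_i)$ is contained in the coordinate box $B_i:=\prod_{v\in e_i\setminus\{v_1,v_2,v_3\}}[0,1]\subseteq\R^{V\setminus\{v_1,v_2,v_3\}}$.

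Next, pass to the torus solution set $\widetilde Z_\lambda:=\{z\in(\C^*)^{n-3}:f_1(z)=\dots=f_{n-3}(z)=0\}$. On $U$ the denominators cleared above are nonzero, so $\widetilde Z_\lambda\cap U=\CR_{\mathcal T}^{-1}(\lambda)$, a finite set (a generic fibre of a generically finite map) that is open in $\widetilde Z_\lambda$; consequently every point of $\CR_{\mathcal T}^{-1}(\lambda)$ is an \emph{isolated} point of $\widetilde Z_\lambda$. Invoking the form of Bernstein's theorem valid for arbitrary (not necessarily generic) coefficients---the number of isolated solutions of a Laurent polynomial system in $(\C^*)^{n-3}$, counted with multiplicity, is at most the mixed volume of the Newton polytopes---together with the monotonicity of mixed volume in each argument, we obtain
$$d_{\mathcal T}\ \le\ \#\{\text{isolated points of }\widetilde Z_\lambda\}\ \le\ \mathrm{MV}\big(\mathrm{Newt}(f_1),\dots,\mathrm{Newt}(f_{n-3})\big)\ \le\ \mathrm{MV}(B_1,\dots,B_{n-3}).$$
Finally, the Minkowski sum $\sum_i t_iB_i$ is itself a box, namely $\prod_{v}\big[0,\ \sum_{i:\,v\in e_i}t_i\big]$, so $\mathrm{Vol}\big(\sum_i t_iB_i\big)=\prod_{v}\big(\sum_{i:\,v\in e_i}t_i\big)$; the coefficient of $t_1\cdots t_{n-3}$ in this polynomial, which is by definition $\mathrm{MV}(B_1,\dots,B_{n-3})$, equals the permanent of the incidence matrix of $G_{\mathcal T}-\{v_1,v_2,v_3\}$, i.e.\ the number $P(G_{\mathcal T}-\{v_1,v_2,v_3\})$ of perfect matchings. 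This is the claimed bound.

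The step I expect to demand the most care is the passage from $\CR_{\mathcal T}^{-1}(\lambda)$ to $\widetilde Z_\lambda$: the set $\widetilde Z_\lambda$ may well have positive-dimensional components---along loci where points of the configuration collide, or where a cleared denominator vanishes---about which Bernstein's theorem says nothing, so it is essential to phrase everything in terms of isolated points and to check that each point of the genuine fibre is isolated in $\widetilde Z_\lambda$. By contrast, the degenerate slot $v_3=\infty$ and the hyperedges meeting $\{v_1,v_2,v_3\}$ in three vertices are mere bookkeeping: in each case $f_i$ only degenerates to a polynomial with a smaller Newton polytope, and the inclusion $\mathrm{Newt}(f_i)\subseteq B_i$ continues to hold.
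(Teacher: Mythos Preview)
Your argument is correct and takes a genuinely different route from the paper's. The paper first recasts $d_{\mathcal T}$ as a curve-counting problem in $(\P^1)^{E}$ (Proposition~\ref{prop:TurnIntoCurveCounting}), augments $\mathcal T$ by an auxiliary hyperedge $e'=\{v_1,v_2,v_3,n+1\}$ (Lemma~\ref{lem:AddEdge}), compactifies the moduli space of $(1,\dots,1)$-curves as $(\P^3)^{E}$ via the identification $\M_{\P^1,E'}\cong\PGL(2)^E$, computes the cohomology classes of the closures of the incidence loci (Proposition~\ref{prop:CalculateClass}), and bounds $d_{\mathcal T}$ by their intersection product using Kleiman transversality together with a semicontinuity argument to control possible boundary contributions. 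The terminal combinatorial step---extracting the coefficient of $\prod_e H_e^3$ from a product of linear forms in $\Z[H_e]_{e\in E}/(H_e^4)$---is essentially the same permanent calculation you carry out for the mixed volume. Your approach short-circuits all of this geometry: normalizing three marked points makes each cross-ratio constraint multiaffine, so the non-generic Bernstein--Kushnirenko inequality bounds the isolated torus solutions directly by a mixed volume of coordinate boxes, which is immediately the permanent. This is considerably more elementary---no moduli spaces, no Gromov--Witten reformulation, no cohomology ring computation---and it makes transparent why one obtains only an inequality (solutions may escape the torus or lie on positive-dimensional components of $\widetilde Z_\lambda$). What the paper's route buys in exchange is the identification of $d_{\mathcal T}$ as a Gromov--Witten invariant of $(\P^1)^{n-3}$ (Remark~\ref{rem:Gromov-Witten}) and a concrete geometric picture of the discrepancy as a boundary contribution in $(\P^3)^{E}$, as illustrated in Example~\ref{ex:NonzeroDiscrepancy}.
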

Note that both partite sets of $G_{\mathcal{T}}-\{v_1,v_2,v_3\}$ have
cardinality $n-3,$ hence this graph may admit perfect matchings.
Note that $P(G_{\mathcal{T}}-\{v_1,v_2,v_3\})$ is also equal to the
number of systems of distinct representatives in the hypergraph
$\mathcal{T}-\{v_1,v_2,v_3\},$ as well as the permanent of the square
matrix obtained by deleting columns $v_1,v_2,v_3$ from the biadjacency
matrix of $G_{\mathcal{T}}$.

To prove Theorem \ref{thm:Matchings}, we first reformulate
$d_{\mathcal{T}}$ as a curve-counting invariant\footnote{In fact,
  $d_{\mathcal{T}}$ is a \emph{Gromov-Witten invariant}, see Corollary
  \ref{cor:GromovWitten} and Section \ref{sec:GromovWitten}
  below.}. For each $v\in V,$ fix a general \emph{linear subvariety}
$Y_v\subseteq(\P^1)^{n-3}$ of type $E_v$ (see Section
\ref{sec:Reformulation}).

\medskip

\noindent\textbf{Theorem \ref{thm:TurnIntoCurveCounting}.}  Exactly
$d_{\mathcal{T}}$ rational curves $C\subseteq(\P^1)^{n-3}$ of
multidegree $(1,\ldots,1)$ intersect $Y_v$ for all $v\in V.$

\medskip

In Section \ref{sec:Proof}, we translate this curve-counting problem
into a problem of intersecting \emph{incidence subvarieties} of a
moduli space $\M$, and compute the ``answer'' via a calculation in the
cohomology ring of a compactification $\overline{\M}\cong(\P^3)^{n-4}$
of $\M$. This ``answer'' is an upper bound only, because the boundary
$\overline{\M}\setminus\M$ may contribute nontrivially (Example
\ref{ex:NonzeroDiscrepancy}).

Proving Theorem \ref{thm:Matchings} requires three steps. First, for
each incidence subvariety $Z_v\subseteq\M$, we compute the class of
its Zariski closure $[\bar{Z_v^e}]\subseteq\overline{\M}$, by a fairly
direct calculation in the cohomology ring of $(\P^3)^{n-4}$
(Proposition \ref{prop:CalculateClass}). Second, we use Kleiman's
Bertini theorem \cite{Kleiman1974} to prove that $d_{\mathcal{T}}$ is
bounded above by the intersection product $\prod_v[\bar{Z_v^e}]$
(Claim \ref{claim:UpperBound}).  Third, we show combinatorially that
$\prod_v[\bar{Z_v^e}]$ is equal to a count of perfect matchings (Claim
\ref{claim:Matchings}).

We discuss some combinatorial implications of Theorem
\ref{thm:Matchings} in Section \ref{sec:Analysis}; in particular,
Proposition \ref{prop:SurplusUpperBound} gives a connection to the
classical notion of \emph{graph surplus} in matching theory.

\begin{rem}\label{rem:Histogram}
  The upper bound in Theorem \ref{thm:Matchings} appears to often give
  a good estimate for $d_{\mathcal{T}}$. Figure \ref{fig:histogram}
  shows histograms of
  $\delta_{\mathcal{T}}:=d_{\mathcal{T}}-\min_{v_1,v_2,v_3\in
    V}P(G_{\mathcal{T}}-\{v_1,v_2,v_3\})$, as $\mathcal{T}$ ranges
  over several thousand randomly-generated 4-uniform hypergraphs such
  that $\min_{v_1,v_2,v_3\in V}P(G_{\mathcal{T}}-\{v_1,v_2,v_3\})>0$,
  where $n=10$ and $n=15$. Note that when $n=10$ the upper bound is
  equal to $d_{\mathcal{T}}$ around $\approx\!75\%$ of the time (and
  $\approx\!32\%$ when $n=15$). In these examples, the average value
  of $d_{\mathcal{T}}$ was $\approx\!1.9$ when $n=10$, and
  $\approx\!7.3$ when $n=15$.
  \begin{figure}
    \centering
    \includegraphics[height=1.5in]{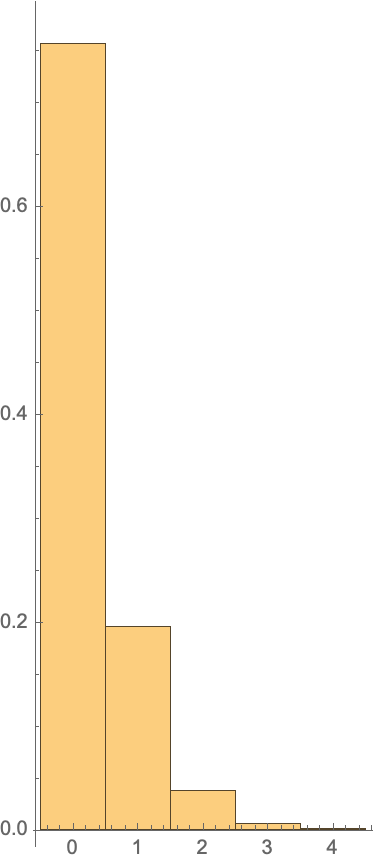}\quad\quad\quad
    \includegraphics[height=1.5in]{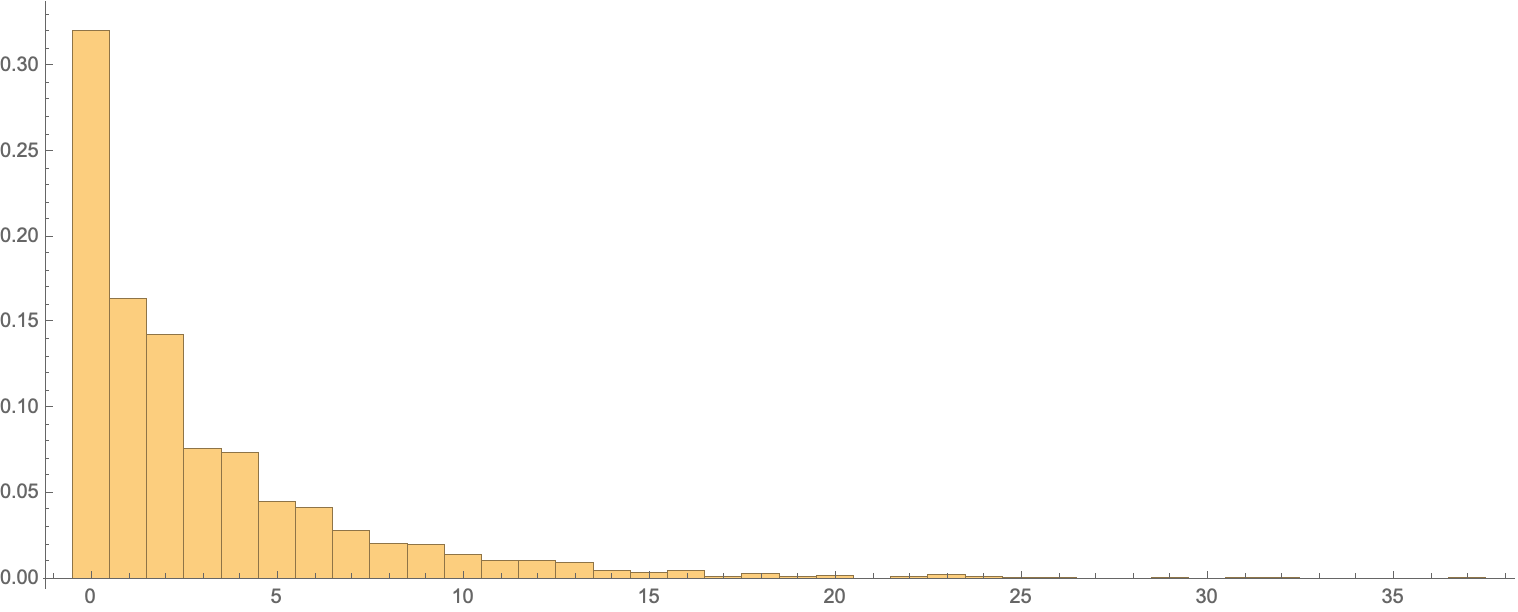}
    \caption{Discrepancies between $d_{\mathcal{T}}$ and the upper
      bound, with $n=10$ (left) and $n=15$ (right)}
    \label{fig:histogram}
  \end{figure}
\end{rem}

While many combinatorial aspects of $M_{0,n}$ and $\Mbar_{0,n}$ are
well-studied (e.g. the characterization of the cohomology groups of
$\Mbar_{0,n}$ in terms of marked trees
\cite{Keel1992,KontsevichManin1994}), this is to our knowledge the
first appearance of the rich combinatorial theory of matchings. From
the perspective of Gromov-Witten theory (see Section
\ref{sec:GromovWitten} just below), the result is fairly unusual in
that proving upper bounds for classes of Gromov-Witten invariants is
difficult in general. In the reverse direction, Theorem
\ref{thm:Matchings} motivates studying cross-ratio degrees from a
combinatorial perspective as interesting hypergraph invariants.

\subsection{Alternate methods of computing
  \texorpdfstring{$d_{\mathcal{T}}$}{d\_T}}\label{sec:AlternateMethods}

We now discuss several of the many ways of computing cross-ratio degrees. As
these are not our main focus, we do not include every detail.

\subsubsection{Coordinate computation}
In small cases, the cross-ratio constraint equations may be solved
directly:
\begin{ex}\label{ex:CoordinateCalculation}
  We now describe the simplest hypergraph $\mathcal{T}$ with
  $d_{\mathcal{T}}\ne0,1.$ It will be a running example in this
  paper; we will show $d_{\mathcal{T}}=2$ in six different ways in
  this section! Let $\mathcal{T}=(V,E)$ with $V=\{1,\ldots,6\}$ and
  $E=\{\{1,2,3,4\},\{1,2,5,6\},\{3,4,5,6\}\},$ i.e. $G_{\mathcal{T}}$
  has biadjacency matrix $$
  \begin{pmatrix}
    1&1&1&1&0&0\\
    1&1&0&0&1&1\\
    0&0&1&1&1&1
  \end{pmatrix}
  .$$ We will calculate $d_{\mathcal{T}}$ by counting 6-marked lines
  $(\P^1,p_1,\ldots,p_6)$ satisfying
  \begin{align*}
    \CR(p_1,p_2,p_3,p_4)&=a_1&
    \CR(p_1,p_2,p_5,p_6)&=a_2&
    \CR(p_3,p_4,p_5,p_6)&=a_3,
  \end{align*}
  where $a_1,a_2,a_3$ are generically chosen scalars. We choose
  coordinates on $\P^1$ so that $p_1=\infty,$ $p_2=0$, $p_3=1.$ Then
  the equations above simplify to
  \begin{align*}
    p_4&=a_1&\frac{p_6}{p_5}&=a_2&\frac{(p_5-1)(p_6-p_4)}{(p_6-1)(p_5-p_4)}&=a_3.
  \end{align*}
  There are two solutions $(p_4,p_5,p_6)=(a_1,b,a_2b),$ where $b$ is a
  solution to the nondegenerate quadratic equation
  $a_2(1-a_3)b^2+(a_1a_2a_3-a_1-a_2+a_3)b+a_1(1-a_3)=0.$ Thus
  $d_{\mathcal{T}}=2.$
\end{ex}

\subsubsection{Geometric computation via Theorem
  \ref{thm:TurnIntoCurveCounting}} Theorem
\ref{thm:TurnIntoCurveCounting} says that $d_{\mathcal{T}}$ is equal
to the number of multidegree-$(1,\ldots,1)$ curves in $(\P^1)^{n-3}$
satisfying $n$ incidence conditions derived from $\mathcal{T}$, where
$n=\abs{V}$. There are various ways to compute these curve counts in
individual cases, e.g. we now see that in our running example, we may
compute $d_{\mathcal{T}}$ by carefully specializing the incidence
conditions:

\begin{ex}\label{ex:cube}
  Let $\mathcal{T}$ be the hypergraph of Example
  \ref{ex:CoordinateCalculation}.  By Theorem
  \ref{thm:TurnIntoCurveCounting}, $d_{\mathcal{T}}$ is equal to the
  number of $(1,1,1)$-curves $C\subseteq(\P^1)^3$ that intersect
  \begin{itemize}
  \item two general lines $Y_1,Y_2$ of the form $(a,b,*)$,
  \item two general lines $Y_3,Y_4$ of the form $(a,*,b)$, and
  \item two general lines $Y_5,Y_6$ of the form $(*,a,b)$.
  \end{itemize}
  (Here e.g. $(a,*,b)$ means the first and third coordinates are fixed
  and the second coordinate is unconstrained.) We solve this
  intersection problem by specializing the incidence conditions,
  omitting some straightforward details. We will need to allow
  singular $(1,1,1)$-curves (connected, as always). We choose
  \begin{align*}
    Y_1&=(0,0,*)&Y_3&=(0,*,0)&Y_5&=(*,\infty,0)\\
    Y_2&=(\infty,0,*)&Y_4&=(\infty,*,\infty)&Y_6&=(*,\infty,\infty).
  \end{align*}
  Suppose $C$ is a $(1,1,1)$-curve passing through
  $Y_1,\ldots,Y_6$. As $C$ intersects $Y_1$ and $Y_2$, which both lie
  in the plane $(*,0,*)$, $C$ must have an irreducible component $C_1$
  contained in this plane. Similarly, as $C$ intersects $Y_5$ and
  $Y_6$, $C$ must have an irreducible component $C_2$ contained in the
  plane $(*,\infty,*).$ This implies that we have multidegrees
  $\deg(C_1)=(1,0,0)$ and $\deg(C_2)=(0,1,0)$, and that $C$ has a
  third irreducible component $C_3$ with degree $(0,0,1).$ The
  component $C_3$ is uniquely determined by $C_1$ and $C_2$ (and
  connectedness of $C$). There are exactly two curves of this form
  that pass through $Y_3$ and $Y_4$, given by the choices
  \begin{align*}
    (C_1,C_2)&=((*,0,0),(\infty,\infty,*))&&\text{and}&(C_1,C_2)&=((*,0,\infty),(0,\infty,*)).
  \end{align*}
  One can (and must) confirm that both curves contribute to the
  intersection with multiplicity 1, giving $d_{\mathcal{T}}=2$.
\end{ex}

\subsubsection{Tropical coordinate computation}\label{sec:TropicalCoordinate}
One may compute $d_{\mathcal{T}}$ via tropical geometry, by
tropicalizing $\CR_{\mathcal{T}}$. Recall that $M_{0,n}^{\trop}$ is
the polyhedral complex parametrizing finite metric trees with $n$
marked (infinite-length) half-edges, where each vertex has valence at
least 3. The map $\CR_{\mathcal{T}}$ has a tropicalization
$\CR_{\mathcal{T}}^{\trop}:M_{0,n}^{\trop}\to(M_{0,4}^{\trop})^{n-3},$
and it follows from algebraic-tropical correspondence theorem of
Tyomkin \cite[Thm. 5.1]{Tyomkin2017} that this map is ``generically
finite of degree $d_{\mathcal{T}}$'' in the sense that over a dense
open subset of $(M_{0,4}^{\trop})^{n-3}$, fibers consist of
$d_{\mathcal{T}}$ points counted with multiplicity. Explicitly, we
have the following.

\begin{ex}\label{ex:tropical}
  Let $\mathcal{T}$ be the hypergraph of Example
  \ref{ex:CoordinateCalculation}.  Consider the cone in
  $(M_{0,4}^{\trop})^3$ consisting of triples of tropical curves of
  types
  \begin{align*}
    \left(\raisebox{-10pt}{
    \begin{tikzpicture}[scale=.5]
      \draw (0,0) node {$\bullet$};
      \draw (1,0) node {$\bullet$};
      \draw (0,0)--(1,0);
      \draw (0,0)--(-.5,.5);
      \draw (0,0)--(-.5,-.5);
      \draw (1,0)--(1.5,.5);
      \draw (1,0)--(1.5,-.5);
      \draw (-.5,.5) node[left] {1};
      \draw (-.5,-.5) node[left] {2};
      \draw (1.5,.5) node[right] {3};
      \draw (1.5,-.5) node[right] {4};
    \end{tikzpicture}
    }
    ,\raisebox{-10pt}{
    \begin{tikzpicture}[scale=.5]
      \draw (0,0) node {$\bullet$};
      \draw (1,0) node {$\bullet$};
      \draw (0,0)--(1,0);
      \draw (0,0)--(-.5,.5);
      \draw (0,0)--(-.5,-.5);
      \draw (1,0)--(1.5,.5);
      \draw (1,0)--(1.5,-.5);
      \draw (-.5,.5) node[left] {1};
      \draw (-.5,-.5) node[left] {2};
      \draw (1.5,.5) node[right] {5};
      \draw (1.5,-.5) node[right] {6};
    \end{tikzpicture}
    }
    ,\raisebox{-10pt}{
    \begin{tikzpicture}[scale=.5]
      \draw (0,0) node {$\bullet$};
      \draw (1,0) node {$\bullet$};
      \draw (0,0)--(1,0);
      \draw (0,0)--(-.5,.5);
      \draw (0,0)--(-.5,-.5);
      \draw (1,0)--(1.5,.5);
      \draw (1,0)--(1.5,-.5);
      \draw (-.5,.5) node[left] {3};
      \draw (-.5,-.5) node[left] {4};
      \draw (1.5,.5) node[right] {5};
      \draw (1.5,-.5) node[right] {6};
    \end{tikzpicture}
    }
\right)
  \end{align*}
  This cone is isomorphic to $(\R_{\ge0})^3$ by recording the edge
  lengths, and its link is a filled triangle. One may compute that the
  preimage of this triangle under $\CR_{\mathcal{T}}^{\trop}$ is as
  depicted in Figure \ref{fig:tropicalization}. Some fibers contain
  $d_{\mathcal{T}}=2$ points, while others contain a single point. The
  latter points map with multiplicity 2, as the associated
  piecewise-linear map is locally given by
  $(x,y,z)\mapsto(x+y,y+z,x+z)$, whose determinant is 2. (The real
  numbers $(x,y,z)$ are the edge lengths of the metric tree.)
  \begin{figure}
    \centering
    \begin{tikzpicture}[xscale=2]
      \filldraw[color=black,fill=black,fill opacity=.1] (0,0)--(2,0)--(1,1)--cycle;
      \draw (0,1.4)--(1,1.5)--(2,1.4)--(1.5,2)--(.9,2.5)--(.5,2)--cycle;
      \draw (1,1.5)--(1.5,2)--(.5,2)--cycle;
      \filldraw[color=black,fill=white,fill opacity=.8] (1,1.5)--(0,1.7)--(.5,2);
      \filldraw[color=black,fill=white,fill opacity=.8] (1,1.5)--(2,1.7)--(1.5,2);
      \filldraw[color=black,fill=white,fill opacity=.8] (.5,2)--(1.1,2.5)--(1.5,2);
      \draw[->] (1,1.4)--(1,1.1);
      \draw[blue] (3,.7) node {\begin{tikzpicture}[scale=.4]
          \draw (0,0) node {$\bullet$};
          \draw (1,0) node {$\bullet$};
          \draw (2,0) node {$\bullet$};
          \draw (3,0) node {$\bullet$};
          \draw (0,0)--(3,0);
          \draw (0,0)--(-.5,.5);
          \draw (0,0)--(-.5,-.5);
          \draw (1,0)--(1,0.7);
          \draw (2,0)--(2,0.7);
          \draw (3,0)--(3.5,.5);
          \draw (3,0)--(3.5,-.5);
          \draw (-.5,.5) node[left] {1};
          \draw (-.5,-.5) node[left] {2};
          \draw (1,0.7) node[above] {4};
          \draw (2,0.7) node[above] {3};
          \draw (3.5,.5) node[right] {5};
          \draw (3.5,-.5) node[right] {6};
        \end{tikzpicture}};
      \draw[blue,thick] (2.4,.7)--(1.7,1.5);
      \draw[blue] (3,1.8) node {\begin{tikzpicture}[scale=.4]
        \draw (0,0) node {$\bullet$};
        \draw (1,0) node {$\bullet$};
        \draw (2,0) node {$\bullet$};
        \draw (3,0) node {$\bullet$};
        \draw (0,0)--(3,0);
        \draw (0,0)--(-.5,.5);
        \draw (0,0)--(-.5,-.5);
        \draw (1,0)--(1,0.7);
        \draw (2,0)--(2,0.7);
        \draw (3,0)--(3.5,.5);
        \draw (3,0)--(3.5,-.5);
        \draw (-.5,.5) node[left] {1};
        \draw (-.5,-.5) node[left] {2};
        \draw (1,0.7) node[above] {3};
        \draw (2,0.7) node[above] {4};
        \draw (3.5,.5) node[right] {5};
        \draw (3.5,-.5) node[right] {6};
      \end{tikzpicture}};
    \draw[blue,thick] (2.4,1.7)--(1.5,1.8);
      \draw[blue] (-1,.7) node {\begin{tikzpicture}[scale=.4]
        \draw (0,0) node {$\bullet$};
        \draw (1,0) node {$\bullet$};
        \draw (2,0) node {$\bullet$};
        \draw (3,0) node {$\bullet$};
        \draw (0,0)--(3,0);
        \draw (0,0)--(-.5,.5);
        \draw (0,0)--(-.5,-.5);
        \draw (1,0)--(1,0.7);
        \draw (2,0)--(2,0.7);
        \draw (3,0)--(3.5,.5);
        \draw (3,0)--(3.5,-.5);
        \draw (-.5,.5) node[left] {1};
        \draw (-.5,-.5) node[left] {2};
        \draw (1,0.7) node[above] {6};
        \draw (2,0.7) node[above] {5};
        \draw (3.5,.5) node[right] {3};
        \draw (3.5,-.5) node[right] {4};
      \end{tikzpicture}};
    \draw[blue,thick] (-.4,.7)--(.3,1.5);
      \draw[blue] (-1,1.8) node {\begin{tikzpicture}[scale=.4]
        \draw (0,0) node {$\bullet$};
        \draw (1,0) node {$\bullet$};
        \draw (2,0) node {$\bullet$};
        \draw (3,0) node {$\bullet$};
        \draw (0,0)--(3,0);
        \draw (0,0)--(-.5,.5);
        \draw (0,0)--(-.5,-.5);
        \draw (1,0)--(1,0.7);
        \draw (2,0)--(2,0.7);
        \draw (3,0)--(3.5,.5);
        \draw (3,0)--(3.5,-.5);
        \draw (-.5,.5) node[left] {1};
        \draw (-.5,-.5) node[left] {2};
        \draw (1,0.7) node[above] {5};
        \draw (2,0.7) node[above] {6};
        \draw (3.5,.5) node[right] {3};
        \draw (3.5,-.5) node[right] {4};
      \end{tikzpicture}};
    \draw[blue,thick] (-.4,1.7)--(.5,1.8);
      \draw[blue] (2,3.5) node {\begin{tikzpicture}[scale=.4]
        \draw (0,0) node {$\bullet$};
        \draw (1,0) node {$\bullet$};
        \draw (2,0) node {$\bullet$};
        \draw (3,0) node {$\bullet$};
        \draw (0,0)--(3,0);
        \draw (0,0)--(-.5,.5);
        \draw (0,0)--(-.5,-.5);
        \draw (1,0)--(1,0.7);
        \draw (2,0)--(2,0.7);
        \draw (3,0)--(3.5,.5);
        \draw (3,0)--(3.5,-.5);
        \draw (-.5,.5) node[left] {3};
        \draw (-.5,-.5) node[left] {4};
        \draw (1,0.7) node[above] {1};
        \draw (2,0.7) node[above] {2};
        \draw (3.5,.5) node[right] {5};
        \draw (3.5,-.5) node[right] {6};
      \end{tikzpicture}};
        \draw[blue,thick] (1.9,3.1)--(1.1,2.2);
      \draw[blue] (0,3.5) node {\begin{tikzpicture}[scale=.4]
        \draw (0,0) node {$\bullet$};
        \draw (1,0) node {$\bullet$};
        \draw (2,0) node {$\bullet$};
        \draw (3,0) node {$\bullet$};
        \draw (0,0)--(3,0);
        \draw (0,0)--(-.5,.5);
        \draw (0,0)--(-.5,-.5);
        \draw (1,0)--(1,0.7);
        \draw (2,0)--(2,0.7);
        \draw (3,0)--(3.5,.5);
        \draw (3,0)--(3.5,-.5);
        \draw (-.5,.5) node[left] {3};
        \draw (-.5,-.5) node[left] {4};
        \draw (1,0.7) node[above] {2};
        \draw (2,0.7) node[above] {1};
        \draw (3.5,.5) node[right] {5};
        \draw (3.5,-.5) node[right] {6};
      \end{tikzpicture}};
    \draw[blue,thick] (.1,3.1)--(.9,2.4);
    \draw[blue] (3.5,3.5) node {\begin{tikzpicture}[scale=.4]
        \draw (0,0) node {$\bullet$};
        \draw (1,0) node {$\bullet$};
        \draw (2,0) node {$\bullet$};
        \draw (1,1) node {$\bullet$};
        \draw (0,0)--(2,0);
        \draw (1,0)--(1,1);
        \draw (0,0)--(-.5,.5);
        \draw (0,0)--(-.5,-.5);
        \draw (1,1)--(.5,1.5);
        \draw (1,1)--(1.5,1.5);
        \draw (2,0)--(2.5,.5);
        \draw (2,0)--(2.5,-.5);
        \draw (-.5,.5) node[left] {1};
        \draw (-.5,-.5) node[left] {2};
        \draw (.5,1.5) node[above] {3};
        \draw (1.5,1.5) node[above] {4};
        \draw (2.5,.5) node[right] {5};
        \draw (2.5,-.5) node[right] {6};
      \end{tikzpicture}};
    \draw[blue,thick] (3,3)--(1,1.75);
    \end{tikzpicture}
    \caption{The tropicalization of $\CR_{\mathcal{T}},$ over a single
      cone of the codomain}
    \label{fig:tropicalization}
  \end{figure}
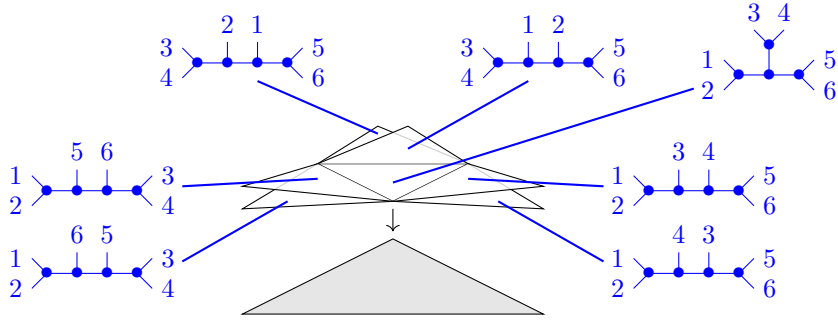
\end{ex}

\subsubsection{Goldner's algorithm}\label{sec:Goldner}
Goldner's algorithm \cite[Lem. 3.11]{Goldner2021}, which we mentioned above and
now describe, gives a recursion for $d_{\mathcal{T}}$ via tropical
geometry. Choose $e=\{v_1,v_2,v_3,v_4\}\in E$, and choose two elements
of $e$, say $v_1,v_2$ without loss of generality. Given
$V'\subseteq V$ such that $v_1,v_2\in V',$ $v_3,v_4\not\in V',$ and
for all $e'\in E\setminus\{e\}$, $\abs{V'\cap e'}\ne2,$ we may create
two new hypergraphs $\mathcal{T}',\mathcal{T}''$ as follows. The
vertices of $\mathcal{T}'$ are $V'\sqcup\{v'\}$ and the edges are
$\{e'\in E:V'\cap e'\ge3\}$, where if $v\in (V\setminus V')\cap e'$,
we replace $v$ with $v'$ in $e'$. The vertices of $\mathcal{T}''$ are
$(V\setminus V')\sqcup\{v''\}$, and the edges are
$\{e'\in E:(V\setminus V')\cap e'\ge3\}$, where if $v\in V'\cap e'$,
we replace $v$ with $v''$ in $e'$. Then by \cite[Lem.
3.11]{Goldner2021}, we have
\begin{align}\label{eq:GoldnerRecursion}
  d_{\mathcal{T}}&=\sum_{V'}d_{\mathcal{T}'}d_{\mathcal{T}''}
\end{align}
where $V'$ runs over subsets of $V$ as above. Note that $\mathcal{T}'$
and $\mathcal{T}''$ have strictly fewer edges than $\mathcal{T}$, as
both edge sets are naturally subsets $E\setminus\{e\}$. This implies
that any cross-ratio degree can be computed recursively from
\eqref{eq:GoldnerRecursion}, using the trivial base case
$d_{\mathcal{T}}=1$ when $\abs{V}=\{1,2,3\}$ and $E=\emptyset$ --- or
alternatively the less-trivial base case $d_{\{1,2,3,4\}}=1.$ The
algorithm can be interpreted as calculating a fiber of the map
$\CR_{\mathcal{T}}^{\trop}$ in Section \ref{sec:TropicalCoordinate},
over a carefully chosen point where all preimage points map with
multiplicity 1.

Goldner's recursion, together with other tropical computational
techniques, gives an algorithm for computing genus-zero tropical plane
curve counts with cross-ratio constraints; these counts had been the
subject of the correspondence theorem of Tyomkin \cite{Tyomkin2017}
mentioned in Section \ref{sec:TropicalCoordinate}.  Relatedly but
separately, Goldner \cite{Goldner2020Generalizing} generalized
Kontsevich's recursive algorithm \cite{KontsevichManin1994} for
rational plane curve counts to allow multiple cross-ratio constraints;
cross-ratio degrees appear in this recursion as initial values. (See
\cite{Goldner2020Thesis} for details.)

\begin{ex}
  This example also appears in \cite[Ex. 3.10]{Goldner2021}. Let
  $\mathcal{T}$ be the hypergraph of Example
  \ref{ex:CoordinateCalculation}. Let $(v_1,v_2,v_3,v_4)=(1,2,3,4).$
  There are two choices for $V',$ namely $\{1,2,5\}$ or $\{1,2,6\}.$
  Taking $V'=\{1,2,5\}$ gives new hypergraphs $\mathcal{T}'$ with
  vertices $\{1,2,5,v'\}$ and the single edge $\{1,2,5,v'\}$, and
  $\mathcal{T}''$ with vertices $\{3,4,6,v''\},$ and the single edge
  $\{3,4,6,v''\}.$ We have $d_{\mathcal{T}'}=d_{\mathcal{T}''}=1$ by
  the base case, giving a contribution of 1 to $d_{\mathcal{T}}$. The
  other choice $V'=\{1,2,6\}$ similarly contributes 1,
  i.e. $d_{\mathcal{T}}=1+1=2.$
\end{ex}


\subsubsection{Intersection theory on \texorpdfstring{$\Mbar_{0,n}$}{M\_\{0,n\}-bar}}\label{sec:IntersectionTheory}
A natural intersection theoretic approach to calculating
$d_{\mathcal{T}}$ is as follows. Recall that $M_{0,n}$ has a
compactification $\Mbar_{0,n}$ \cite{Knudsen1983}, the space of
$n$-marked stable rational curves, whose cohomology is relatively
well-understood. The cross-ratio degree $d_{\mathcal{T}}$ is equal to
the degree of the pullback of the class $\prod_{e\in E}H_e$ of a point
in $(\P^1)^{n-3}$ under $\CR_{\mathcal{T}}.$ That is,
$$d_{\mathcal{T}}=\deg\left(\prod_{e\in
    E}\CR_{\mathcal{T}}^*(H_e)\right).$$ At first glance, this
approach appears similar to the one we use to prove Theorem
\ref{thm:Matchings}. However, the intersection problems are not
related in a straightforward way; e.g. here there are $n-3$
subvarieties being intersected rather than $n$ subvarieties.

Recall \cite{Keel1992} that $H^*(\Mbar_{0,n},\Q)$ is generated as a
$\Q$-algebra by the classes of \emph{boundary divisors} $D_S$ with
$S\subseteq\{1,\ldots,n\}$ and $2\le\abs{S}\le n-2,$ with additive
relations as characterized in \cite{KontsevichManin1994}. We may
express $\CR_{\mathcal{T}}^*(H_e)$ in terms of boundary divisors;
e.g. if $e=\{1,2,3,4\},$ then $\CR_{\mathcal{T}}^*(H_e)$ can be
written as the sum of all boundary divisors $D_S$ with $1,2\in S$ and
$3,4\not\in S.$ One may thus express $d_{\mathcal{T}}$ as a truly
enormous sum of monomials in boundary divisors. While the class of
each monomial can be computed rather straightforwardly, the
combinatorics of working with so many terms --- which can turn out to
be negative --- appears prohibitively difficult; computing a single
cross-ratio degree this way is feasible, but gaining insight into how
$d_{\mathcal{T}}$ depends on the structure of $\mathcal{T}$ is
probably not.

\begin{ex}\label{ex:BoundaryStrataProduct}
  Let $\mathcal{T}$ be as in Example
  \ref{ex:CoordinateCalculation}. Then $d_{\mathcal{T}}$ is the degree
  of the dimension-zero intersection
  \begin{align*}
    (D_{\{1,2\}}+&D_{\{1,2,5\}}+D_{\{1,2,6\}}+D_{\{1,2,5,6\}})\\
                 &\cdot(D_{\{1,2\}}+D_{\{1,2,3\}}+D_{\{1,2,4\}}+D_{\{1,2,3,4\}})\cdot(D_{\{3,4\}}+D_{\{1,3,4\}}+D_{\{2,3,4\}}+D_{\{1,2,3,4\}}).
  \end{align*}
  The 64 terms of this sum can be evaluated by a routine calculation
  expressing products of boundary divisors in terms of \emph{boundary
    strata} and \emph{$\psi$ classes} \cite{KockNotes}; 50 terms are
  zero, 8 are equal to 1 (times the class of a point), and 6 are equal
  to $-1,$ giving $d_{\mathcal{T}}=2.$
\end{ex}

A marginally more promising approach is to use a different generating
set for $H^*(\Mbar_{0,n},\Q)$ introduced in
\cite{Singh2004,EtingofHenriquesKamnitzerRains2010}, consisting of
divisors $\Pi_S$ for $S\subseteq\{2,\ldots,n\}$ with $\abs{S}\ge3.$
This gives a simpler expression:
\begin{align*}
  \CR_{\mathcal{T}}^*(H_e)=
  \begin{cases}
    \Pi_{e\setminus\{1\}}&1\in e\\
    \left(\sum_{v\in e}\Pi_{e\setminus\{v\}}\right)-2\Pi_e&1\not\in e.
  \end{cases}
\end{align*}
Again, it is straightforward to evaluate a degree-$(n-3)$ monomial in
the divisors $\Pi_S$ via additive relations, see e.g. \cite[Lem.
5.4]{EtingofHenriquesKamnitzerRains2010}. However, the existence of
negative signs in the factors of
$\prod_{e\in E}\CR_{\mathcal{T}}^*(H_e)$ make it difficult to study
how $d_{\mathcal{T}}$ depends combinatorially on $\mathcal{T}.$

\begin{ex}
  The product in \ref{ex:BoundaryStrataProduct} is rewritten
  as
  $$\Pi_{\{2,3,4\}}\cdot\Pi_{\{2,5,6\}}\cdot(\Pi_{\{3,4,5\}}+\Pi_{\{3,4,6\}}+\Pi_{\{3,5,6\}}+\Pi_{\{4,5,6\}}-2\Pi_{\{3,4,5,6\}}).$$
  The five terms of this sum can be evaluated using \cite[Lem.
  5.4]{EtingofHenriquesKamnitzerRains2010} to give
  $d_{\mathcal{T}}=1+1+1+1-2=2.$
\end{ex}

\subsubsection{Gromov-Witten theory}\label{sec:GromovWitten} We show
(Corollary \ref{cor:GromovWitten}, which combines Theorem
\ref{thm:TurnIntoCurveCounting} with a result of
\cite{FultonPandharipande1997}) that $d_{\mathcal{T}}$ is a
\emph{Gromov-Witten invariant} of $(\P^1)^{n-3}$, a type of
curve-counting invariant known for its relationship to mirror
symmetry. 
As $(\P^1)^{n-3}$ is toric, $d_{\mathcal{T}}$ may be computed via
well-known techniques such as torus localization \cite{Kontsevich1995}
or mirror theorems \cite{Givental1998toric}. Again, however, it
appears difficult to find a relationship between $d_{\mathcal{T}}$ and
the combinatorial structure of $\mathcal{T}$ via these methods.

\subsubsection{Castravet-Tevelev's hypertree projections}\label{sec:Hypertrees}
Castravet-Tevelev prove \cite[Thm. 3.2]{CastravetTevelev2013} that
$d_{\mathcal{T}}\le1$ in the special case where
$\bigcap_{e\in E}e\ne\emptyset$. In this case, they show that
$d_{\mathcal{T}}=1$ if and only if $G_{\mathcal{T}}$ has \emph{surplus
  3}; see Section \ref{sec:Analysis}, and Question
\ref{question:SurplusNonzero} in particular. We note that
\cite[Thm. 3.2]{CastravetTevelev2013} can also be recovered via the
intersection product calculation in Section
\ref{sec:IntersectionTheory} using the generators $\{\Pi_S\}$; in the
case $\bigcap_{e\in E}e\ne\emptyset$, the product in question has a
single term, and \cite[Lem. 5.4]{EtingofHenriquesKamnitzerRains2010}
implies that the term is equal to 0 or 1, according to whether or not
$G_{\mathcal{T}}$ has surplus 3.

\section{Reformulation as a curve-counting
  problem}\label{sec:Reformulation} In this section we prove Theorem
\ref{thm:TurnIntoCurveCounting}, which recasts $d_{\mathcal{T}}$ as a
curve-counting invariant. Let $S$ be a set with $\abs{S}=k\ge1.$ Let
$\M_{\P^1,S}$ (or $\M_{\P^1,k}$) denote the moduli space of smooth
connected curves in $(\P^1)^S$ of multidegree $(1,\ldots,1)$, more
commonly denoted by $M_{0,0}((\P^1)^S,(1,\ldots,1)).$ Then
$\M_{\P^1,S}$ is a smooth quasiprojective variety of dimension
$3(k-1)$ \cite{FultonPandharipande1997}. A curve $C\in\M_{\P^1,S}$ is
necessarily isomorphic to $\P^1$, as $C$ is irreducible and any
composition $C\to(\P^1)^S\xrightarrow{\pi_s}{}\P^1$ has degree 1,
hence is an isomorphism.

Suppose $S'\subseteq S$ is nonempty. The projection of a
$(1,\ldots,1)$-curve along $\pi_{S'}:(\P^1)^S\to(\P^1)^{S'}$ is a
$(1,\ldots,1)$-curve, yielding a natural map
$\M_{\P^1,S}\to\M_{\P^1,S'}$.
Given $P\in(\P^1)^{S'},$ we define $Y(S',P)=\pi_{S'}^{-1}(P).$ We call
a subvariety of this form a \textbf{\textit{linear subvariety}} of
    $(\P^1)^S$ of type $S'$.
For any $S'\subseteq S$, the group $\PGL(2)^S$ acts transitively on
linear subvarieties of $(\P^1)^S$ of type $S'$. Note that if
$C\in\M_{\P^1,S}$ intersects a proper linear subvariety $Y(S',P)$,
then $C\cap Y(S',P)$ is a single reduced point; this follows from the
fact that $Y(S',P)$ is contained in a codimension-1 linear subvariety,
whose intersection number with $C$ is 1. We denote by
$Z(S',P)\subseteq\M_{\P^1,S}$ the \emph{incidence subvariety}
$Z(S',P)=\{C\in\M_{\P^1,S}:C\cap Y(S',P)\ne\emptyset\}.$

Let $\mathcal{T}=(V,E)$ be a 4-uniform hypergraph with vertex set
$V=\{1,\ldots,n\}$ and edge set $E$, where $\abs{E}=n-3$. Let
$I=\{(v,e)\in V\times E:v\in e\}$ denote the incidence set, and for
$v\in V,$ let $E_v=\{e\in E:v\in e\}$.
\begin{thm}\label{thm:TurnIntoCurveCounting}
  For each $v\in V,$ fix a general linear subvariety
  $Y_v\subseteq(\P^1)^E$ of type $E_v,$ i.e. $Y_v=Y(E_v,P_v)$ for a
  general $P_v\in(\P^1)^{E_v}.$ Then there are exactly
  $d_{\mathcal{T}}$ curves $C\in\M_{\P^1,E}$ such that
  $C\cap Y_v\ne\emptyset$ for all $v\in V$. Equivalently, the
  corresponding incidence subvarieties
  $Z_v=Z(E_v,P_v)\subseteq\M_{\P^1,E}$ intersect in exactly
  $d_{\mathcal{T}}$ points.
\end{thm}
\begin{proof}
  For $A$ a finite set, let $\M_{\P^1,S,A}$ denote the moduli space of
  smooth connected curves $C\subseteq(\P^1)^S$ of multidegree
  $(1,\ldots,1)$ together with an injection $\iota:A\into C$. This is
  a smooth quasiprojective variety of dimension
  $3(\abs{S}-1)+\abs{A},$ see \cite{FultonPandharipande1997}. There is
  a natural map $\pr_1:\M_{\P^1,S,A}\to\M_{\P^1,S}$ that forgets $A$,
  and when $\abs{A}\ge3$ there is a natural map
  $\pr_2:\M_{\P^1,S,A}\to M_{0,A}=M_{0,\abs{A}}$ that forgets the map $C\into(\P^1)^S.$

  We have a commutative diagram:
  \begin{align}\label{eq:CommDiagGW}
    \begin{tikzcd}[ampersand replacement=\&]
      \&\M_{\P^1,E,V}\arrow[dl,"\pr_1"]\arrow[d,"\pr_2"]\arrow[r,"\ev"]\&(\P^1)^{V\times
        E}\setminus\Delta_1\arrow[r,"\pi_{I}"]\&(\P^1)^{I}\setminus\Delta_2\arrow[d,"\CR^{\otimes E}"]\\
      \M_{\P^1,E}\&M_{0,\abs{V}}\arrow[rr,"\CR_{\mathcal{T}}"]\&\&(\P^1\setminus\{\infty,0,1\})^{E}\setminus\Delta_3
    \end{tikzcd}
  \end{align}
  where
  \begin{itemize}
  \item $\Delta_1\subseteq(\P^1)^{V\times E}$ is the set
    $\{(z_{(v,e)})_{(v,e)\in V\times E}:z_{(v,e)}=z_{(v',e)}\text{ for
      some $e$, $v\ne v'$}\},$
  \item $\Delta_2\subseteq(\P^1)^I$ is the set
    $\{(z_{(v,e)})_{(v,e)\in I}:z_{(v,e)}=z_{(v',e)}\text{ for some
      $e$, $v\ne v'$}\},$ and
  \item $\Delta_3\subseteq(\P^1\setminus\{\infty,0,1\})^{E}$ is the
    large diagonal
    $\{(z_e)_{e\in E}:z_{e_1}=z_{e_2}\text{ for some $e_1\ne e_2$}\}.$
  \end{itemize}
  We claim the rectangle in \eqref{eq:CommDiagGW} is Cartesian. Indeed
  commutativity gives a natural map
  $F:\M_{\P^1,E,V}\to
  M_{0,V}\times_{(\P^1\setminus\{\infty,0,1\})^{E}\setminus\Delta_3}(\P^1)^I\setminus\Delta_2.$
  We now define a map
  $$G:M_{0,V}\times_{(\P^1\setminus\{\infty,0,1\})^{E}\setminus\Delta_3}(\P^1)^I\setminus\Delta_2\to\M_{\P^1,E,V}$$ that commutes with $\pr_2$ and
  $\pi_I\circ\ev,$ and is thus guaranteed to be an inverse of $F$, as
  follows. Let $(C,\iota:V\into C)\in M_{0,V}$ and
  $(z_{v,e})_{(v,e)\in I}\in(\P^1)^{I}\setminus\Delta_2$ be points
  (over a base scheme $S$) such that for all
  $e=(v_1,v_2,v_3,v_4)\in E$ with $v_1<v_2<v_3<v_4,$
  $$\CR(z_{(v_1,e)},\ldots,z_{(v_4,e)})=\CR(\iota(v_1),\ldots,\iota(v_4)).$$
  We define $G((C,\iota),(z_{(v,e)})_{(v,e)\in I})$ to be the marked
  curve $(C,\iota),$ together with the embedding
  $f=(f_e)_{e\in E}:C\into(\P^1)^E$ uniquely determined by
  $$(f_e(\iota(v_1)),f_e(\iota(v_2)),f_e(\iota(v_3)))=(z_{(v_1,e)},z_{(v_2,e)},z_{(v_3,e)}).$$
  The fact that $G$ commutes with $\pr_2$ is automatic, and the fact
  that $G$ commutes with $\pi_I\circ\ev$ follows from
  $f_e(v_{i_4})=z_{(v_4,e)},$ which is guaranteed by the assumption
  $\CR(z_{(v_1,e)},\ldots,z_{(v_4,e)})=\CR(\iota(v_1),\ldots,\iota(v_4)).$
  Thus \eqref{eq:CommDiagGW} is Cartesian. Thus $\pi_I\circ\ev$ is generically finite of degree
  $d_{\mathcal{T}}$ by surjectivity of $\CR^{\otimes E}$.

  The set of curves $C\in\M_{\P^1,E}$ that intersect $Y_v$ for all
  $v\in V$ is precisely $\pr_1((\pi_I\circ\ev)^{-1}((P_v)_{v\in V})).$
  On the other hand, given
  $C\in\pr_1((\pi_I\circ\ev)^{-1}((P_v)_{v\in V}))$, as $Y_v$ is a
  linear subvariety of $(\P^1)^{E},$ $C$ intersects $Y_v$ in a single
  reduced point. Thus $C$ determines a unique point of
  $\M_{\P^1,E,V}$, so there are exactly $d_{\mathcal{T}}$ such curves.
\end{proof}
\begin{cor}\label{cor:GromovWitten}
  The cross-ratio degree $d_{\mathcal{T}}$ coincides with the
  Gromov-Witten invariant
  $$\left\langle\{Y_v\}_{v\in
    V}^{\textcolor{white}{I}}\right\rangle_{0,n,(1,\ldots,1)}^{(\P^1)^E}=\int_{\Mbar_{0,n}((\P^1)^E,(1,\ldots,1))}(\pi_I\circ\ev)^*([pt]).$$
\end{cor}
\begin{proof}
  This is immediate from Theorem \ref{thm:TurnIntoCurveCounting} and
  \cite[Lem. 14]{FultonPandharipande1997}.
\end{proof}

\section{Proof of Theorem \ref{thm:Matchings}}\label{sec:Proof}

By Corollary \ref{cor:GromovWitten}, $d_{\mathcal{T}}$ can be
expressed as an intersection number on the Kontsevich compactification
$\Mbar_{0,n}((\P^1)^{n-3},(1,\ldots,1))$ of $\M_{\P^1,E,V}$. We will
prove Theorem \ref{thm:Matchings} by computing an intersection number
in a similar spirit, but rather on a very simple compactification of
$\M_{\P^1,E}$ whose cohomology groups are much smaller than those of
$\Mbar_{0,n}((\P^1)^{n-3},(1,\ldots,1))$.

\subsection{Cohomology classes of incidence subvarieties}\label{sec:Cohomology} For
$e\in E,$ we define an isomorphism
$\rho_e:\M_{\P^1,E}\to\PGL(2)^{E\setminus\{e\}}$ as follows. Fix
$C\in\M_{\P^1,E}.$ The projection onto the $e$-th coordinate of
$(\P^1)^{E}$ defines an isomorphism $\nu_e:C\cong\P^1$. For $e'\in E$,
$\nu_{e'}\circ\nu_e^{-1}:\P^1\to\P^1$ is an automorphism,
i.e. $\nu_{e'}\circ\nu_e^{-1}\in\PGL(2).$ Let
$$\rho_e(C)=(\nu_{e'}\circ\nu_e^{-1})_{e'\in
  E\setminus\{e\}}\in\PGL(2)^{E\setminus\{e\}}.$$ The map $\rho_e$ has
a clear inverse; given
$(A_{e'})_{e'\in E\setminus\{e\}}\in\PGL(2)^{E\setminus\{e\}},$ one
embeds $\P^1$ into $(\P^1)^{n-3}$ via the identity map in the $e$-th
coordinate, and via $A_{e'}$ in the $e'$-th coordinate for $e'\ne e.$ We have an open immersion $\PGL(2)\into\P^3,$ hence for each $e\in E,$
we obtain a compactification
$\bar{\rho_e}:\M_{\P^1,E}\into(\P^3)^{E\setminus\{e\}}$. We now
calculate the class of the Zariski closure of each incidence
subvariety $Z_v$ in the cohomology of $(\P^3)^{E\setminus\{e\}}.$
\begin{prop}\label{prop:CalculateClass}
  Fix $v\in V$ and $e\in E.$ For $P\in(\P^1)^{E_v},$ let
  $Z_v=Z(E_v,P)\subseteq\M_{\P^1,E}$ be the incidence subvariety as in
  Section \ref{sec:Reformulation}. Let $\bar{Z_{v}^e}$ denote the Zariski
  closure of $Z_v$ under the embedding $\bar{\rho_e}$. For
    $e'\in E\setminus\{e\},$ let $H_{e'}$ denote the pullback of the
    hyperplane class on $\P^3$ along the $e'$-th projection. Then
  \begin{align}
    [\bar{Z_v^e}]=
    \begin{cases}
      \prod_{e'\in E_v\setminus\{e\}}H_{e'}&v\in e\\
      \mathbf{e}_{\abs{E_v}-1}(\{H_{e'}:e'\in E_v\})&v\not\in e,
    \end{cases}
  \end{align}
  where $\mathbf{e}_{\abs{E_v}-1}$ is the elementary symmetric polynomial of
  degree $\abs{E_v}-1.$
\end{prop}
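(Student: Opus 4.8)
The plan is to compute $[\bar{Z_v^e}]$ by understanding the geometry of $Z_v$ inside $\M_{\P^1,E}\cong\PGL(2)^{E\setminus\{e\}}$ under the isomorphism $\rho_e$, and then taking Zariski closure inside $(\P^3)^{E\setminus\{e\}}$. Fix the linear subvariety $Y_v=Y(E_v,P)$ with $P=(p_{e'})_{e'\in E_v}\in(\P^1)^{E_v}$ general. A curve $C\in\M_{\P^1,E}$, identified with $(A_{e'})_{e'\in E\setminus\{e\}}\in\PGL(2)^{E\setminus\{e\}}$ (and $A_e=\mathrm{id}$), intersects $Y_v$ if and only if there is a point $t\in\P^1$ (the coordinate on $C$ via $\nu_e$) such that $A_{e'}(t)=p_{e'}$ for all $e'\in E_v$. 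So the condition is that the points $A_{e'}^{-1}(p_{e'})$, as $e'$ ranges over $E_v$, all coincide. This is the defining description of $Z_v$ that I would exploit.

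I would split into the two cases of the statement. If $v\in e$, then $e\in E_v$, and since $A_e=\mathrm{id}$, the common value $t$ is forced to be $t=p_e$; the condition becomes $A_{e'}(p_e)=p_{e'}$ for each $e'\in E_v\setminus\{e\}$. For a fixed $e'$, the set of $A_{e'}\in\PGL(2)$ sending a fixed point $p_e$ to a fixed point $p_{e'}$ is a codimension-one subvariety of $\PGL(2)$; under the embedding $\PGL(2)\into\P^3$ it is the restriction of a hyperplane (the map $A\mapsto A(p_e)$ is given by linear forms in the $\P^3$-coordinates, so the locus $A(p_e)=p_{e'}$ is cut out by a single linear equation — indeed it is literally a hyperplane section of $\P^3$). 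These $\abs{E_v}-1$ conditions are on disjoint sets of factors, so their closures meet transversally and the class is $\prod_{e'\in E_v\setminus\{e\}}H_{e'}$, as claimed. The only thing to check carefully is that the Zariski closure of the $\PGL(2)$-locus really is the full hyperplane in $\P^3$ and doesn't pick up extra components at the boundary $\P^3\setminus\PGL(2)$ — this follows because the hyperplane is irreducible and meets $\PGL(2)$, so its intersection with $\PGL(2)$ is dense in it.

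If $v\notin e$, then $e\notin E_v$ and the condition ``the points $A_{e'}^{-1}(p_{e'})$ for $e'\in E_v$ all coincide'' does not pin down the common value $t$. Introducing $t\in\P^1$ as an auxiliary parameter, $Z_v$ is the image of the incidence variety $\{(t,(A_{e'})): A_{e'}(t)=p_{e'}\ \forall e'\in E_v\}\subseteq\P^1\times\PGL(2)^{E\setminus\{e\}}$ under projection. For fixed $t$, each condition $A_{e'}(t)=p_{e'}$ is again a hyperplane section in the $e'$-th $\P^3$ factor (linear in the $\P^3$-coordinates, with coefficients depending on $t$), so the fiber over $t$ is a product of hyperplanes in the $\abs{E_v}$ factors indexed by $E_v$; as $t$ varies over $\P^1$ we get a one-parameter family of such codimension-$\abs{E_v}$ planes, whose total space has codimension $\abs{E_v}-1$. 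To compute its class I would integrate over the $\P^1$ of values of $t$: writing $H_{e'}(t)$ for the moving hyperplane, the class of the total space is $\int_{\P^1}\prod_{e'\in E_v}H_{e'}(t)$. Parametrizing $t\mapsto[1:t]$, each $H_{e'}(t)$ is linear in $t$, so $H_{e'}(t) = H_{e'}^{(0)} + t\,H_{e'}^{(1)}$ with $H_{e'}^{(0)},H_{e'}^{(1)}$ both representing the hyperplane class $H_{e'}$ in cohomology; expanding the product and integrating over $\P^1$ (which extracts the coefficient of $t^1$, i.e. picks exactly one factor to ``differentiate'') yields $\sum_{e'\in E_v}\prod_{e''\in E_v\setminus\{e'\}}H_{e''}=\mathbf{e}_{\abs{E_v}-1}(\{H_{e'}:e'\in E_v\})$. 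I would make this rigorous by working with the correspondence $\Gamma\subseteq\P^1\times(\P^3)^{E_v}$ cut out by the bilinear equations $A_{e'}(t)=p_{e'}$ and pushing forward, or equivalently by a direct Chow-ring computation on $\P^1\times(\P^3)^{E_v}$.

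The main obstacle is the case $v\notin e$: one must set up the auxiliary parameter $t$ carefully, verify that the correspondence $\Gamma$ is of the expected dimension and that its projection to $(\P^3)^{E\setminus\{e\}}$ is birational onto its image (generically one $t$ per point of $Z_v$, which holds because the $A_{e'}^{-1}(p_{e'})$ being equal determines $t$ as soon as $\abs{E_v}\ge1$ and the $A_{e'}$ are generic), and control the behavior over the boundary $(\P^3\setminus\PGL(2))$ so that $[\bar{Z_v^e}]$ is exactly the pushforward class and not something larger. Transversality/reducedness of $Z_v$ itself (so that the cycle is reduced with multiplicity one) is guaranteed by the genericity of $P$ via a Bertini-type argument using the transitive $\PGL(2)^{E_v}$-action on linear subvarieties of type $E_v$ noted in Section~\ref{sec:Reformulation}.
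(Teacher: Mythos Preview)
Your treatment of the case $v\in e$ is essentially identical to the paper's: both observe that the intersection point $t$ is forced to equal $p_e$, so each remaining condition $A_{e'}(p_e)=p_{e'}$ is a linear equation on the $e'$-th $\P^3$-factor, and the closure of a hyperplane slice of $\PGL(2)$ is the full hyperplane.

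For $v\notin e$ your route is genuinely different. The paper does \emph{not} introduce the auxiliary $\P^1$; instead it slices $\bar{Z_v^e}$ by the fiber $J_{\Id}=\{B_{e''}=\Id\}$ for each $e''\in E_v$, identifies the slice with an incidence locus of the already-handled type $v\in e''$, checks transversality of $\bar{Z_v^e}\cap J_{\Id}$ using the $\PGL(2)$-action on the $e$-th coordinate, and then argues that the only class $\alpha$ of the correct degree with $H_{e''}^3\alpha=H_{e''}^3\prod_{e'\ne e''}H_{e'}$ for every $e''\in E_v$ is $\mathbf{e}_{|E_v|-1}$. Your approach instead realizes $\bar{Z_v^e}$ as the image of a correspondence $\Gamma\subset\P^1\times(\P^3)^{E_v}$ cut out by the bidegree-$(1,1)$ divisors of class $h+H_{e'}$, and pushes forward $[\Gamma]=\prod_{e'}(h+H_{e'})$ to extract the coefficient of $h$. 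This is a clean and direct computation; what it buys is that you never need the somewhat delicate transversality argument, at the cost of having to verify that $\Gamma$ is irreducible (it fibers over $\P^1$ with fibers a product of $\P^2$'s, so this is easy) and that $\Gamma\to\bar{Z_v^e}$ is birational (generically $t$ is recovered as $A_{e'}^{-1}(p_{e'})$). Your informal ``coefficient of $t^1$'' heuristic is not quite right as stated, but the rigorous version you point to---computing $\pi_*\prod_{e'}(h+H_{e'})$ in the Chow ring of $\P^1\times(\P^3)^{E_v}$---is exactly correct and yields $\mathbf{e}_{|E_v|-1}$ immediately. The boundary-control issue you flag is handled by the irreducibility of $\Gamma$: since $\Gamma$ is irreducible and its image contains the dense open $\rho_e(Z_v)$, the image is exactly $\bar{Z_v^e}$, with no spurious components.
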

\begin{proof}
  Note that the natural $\PGL(2)^E$-action on $(\P^1)^E$ induces an
  action on $\M_{\P^1,E}$, and that this action extends to
  $(\P^3)^{E\setminus\{e\}}.$ (Explicitly, $(A_{e'})_{e'\in E}$ sends
  $(B_{e''})_{e''\in E\setminus\{e\}}$ to
  $(A_{e''}B_{e''}A_e^{-1})_{e''\in E\setminus\{e\}}.$) As observed
  above, the action is transitive on linear subvarieties, and hence we
  may assume without loss of generality that
  $P=P_0=(0,\ldots,0)\in(\P^1)^{E_v}.$

  If $v\in e,$ then
  $$\rho_e(Z(E_v,P_0))=\{(B_{e'})_{e'\in
    E\setminus\{e\}}:\text{$B_{e'}\cdot[1:0]=[1:0]$ if $v\in e'$}\};$$
  that is, the lower left entry of the matrix $B_{e'}$ is zero for
  $e'\ni v.$ The same condition holds after taking the Zariski
  closure, so $[\bar{Z_v^e}]$ is the product of hyperplane classes
  pulled back from $(\P^3)^{E_v\setminus\{e\}},$ as claimed.

  Suppose $v\not\in e.$ 
  Fix $e''\in E_v.$ Consider $\bar{Z_v^e}\cap J_{\Id},$ where
  $J_A=\{(B_{e'})_{e'\in
    E\setminus\{e\}}:B_{e''}=A\}\subseteq(\P^3)^{E\setminus\{e\}}$. Note
  that $J_{\Id}\cong(\P^3)^{E\setminus\{e,e''\}},$ and
  $[J_{\Id}]=H_{e''}^3.$ Inside $J_{\Id}$, we have a natural
  identification of $\bar{Z_v^e}\cap J_{\Id}$ with
  $\rho_{e''}(Z_{E\setminus\{e\}}(E_v,P)),$ where
  $Z_{E\setminus\{e\}}(E_v,P)\subseteq\M_{\P^1,E\setminus\{e\}}$ is
  the incidence subvariety with ambient space
  $(\P^1)^{E\setminus\{e\}}$. By the previous computation, inside
  $(\P^3)^{E\setminus\{e,e''\}},$ we have
  $[\bar{Z_v^e}\cap J_{\Id}]=\prod_{e'\in E_v\setminus\{e''\}}H_{e'}.$
  We claim the intersection $\bar{Z_v^e}\cap J_{\Id}$ is transverse,
  so that in $(\P^3)^{E\setminus\{e\}}$ we have
  $[\bar{Z_v^e}\cap J_{\Id}]=H_{e''}^3\cdot\prod_{e'\in
    E_v\setminus\{e''\}}H_{e'}.$
  
  We must show that at any point of $\bar{Z_v^e}\cap J_{\Id}$, the
  tangent space to $\bar{Z_v^e}$ spans the normal space to the fiber
  $J_{\Id}$, which is naturally identified with the tangent space to
  $\P^3$ at $\Id.$ To see this, note that since $v\not\in e,$
  $\bar{Z_v^e}$ is invariant under the action of $\PGL(2)$ on
  $(\P^3)^{E\setminus\{e\}}$ induced by the action on the $e$-th
  coordinate of $(\P^1)^e.$ This action respects the projection from
  $(\P^3)^{E\setminus\{e\}}$ to the $e''$-th coordinate $\P^3$, and
  its derivative acts transitively on the tangent space to $\P^3$ at
  $\Id$; both follow from the explicit form of the action at the
  beginning of this proof. Thus at any point of
  $\bar{Z_v^e}\cap J_{\Id}$, the tangent space of $\bar{Z_v^e}$ spans
  the normal space to $J_{\Id},$ i.e. the intersection is transverse.

  The only degree-$(\abs{E_v}-1)$ element
  $$\alpha\in H^*((\P^3)^{E\setminus\{e\}})\cong\Z[\{H_{e'}\}_{e'\in
    E\setminus\{e\}}]/\langle\{H_{e'}^4\}_{e'\in
    E\setminus\{e\}}\rangle$$ satisfying
  $H_{e''}^3\alpha=H_{e''}^3\cdot\prod_{e'\in
    E_v\setminus\{e''\}}H_{e'}$ for all $e''\in E_v$ is
  $\alpha=\mathbf{e}_{\abs{E_v}-1}(\{H_{e'}:e'\in E_v\})$, concluding
  the proof.
\end{proof}





\subsection{Proof of the upper bound}

We will need the following, which is also one of the basic
observations about the numbers $d_{\mathcal{T}}$:
\begin{lem}\label{lem:AddEdge}
  Let $\mathcal{T}=(V,E)$ be a 4-uniform hypergraph with
  $\abs{E}=\abs{V}-3$, and fix $v_1,v_2,v_3\in V.$ Let
  $\mathcal{T}'=(V',E')$ with $V'=V\cup\{n+1\}$ and $E'=E\cup\{e'\},$
  where $e'=\{v_1,v_2,v_3,n+1\}.$ Then
  $d_{\mathcal{T}}=d_{\mathcal{T}'}.$
\end{lem}
\begin{proof}
  Consider the diagram
  \begin{align*}
    \begin{tikzcd}[ampersand replacement=\&]
      M_{0,V'}\arrow[r,"\CR_{\mathcal{T}'}"]\arrow[d,"\mu_{n+1}"]\&(\P^1)^{E'}\arrow[d,"\pi_{E}"]\\
      M_{0,V}\arrow[r,"\CR_{\mathcal{T}}"]\&(\P^1)^E
    \end{tikzcd}
  \end{align*}
  where $\mu_{n+1}$ is the map that forgets the marked point
  $\iota(n+1).$ This defines a natural map
  $F:M_{0,V'}\to M_{0,V}\times_{(\P^1)^E}(\P^1)^{E'}.$ We define a
  rational inverse
  $G:M_{0,V}\times_{(\P^1)^E}(\P^1)^{E'}\dashrightarrow M_{0,V'}$ by
  $G((C,\iota),P)=(C,\iota'),$ where $\iota':V'\to C$ is defined by
  $\iota'(v)=\iota(v)$ for $v\in V$ and $\iota'(n+1)$ is determined by
  $\CR(\iota(v_1),\iota(v_2),\iota(v_3),\iota'(n+1))=\pi_{{\{e'\}}}(P)\in\P^1.$
  Then $G$ is well-defined whenever $\pi_{{\{e'\}}}(P)$ avoids the
  finite set
  $$\{\CR(\iota(v_1),\iota(v_2),\iota(v_3),\iota(v))\}_{v\in
    V}\subseteq\P^1,$$ hence is a rational map. Where defined, $G$
  commutes with $\mu_{n+1}$ by definition, and commutes with
  $\CR_{\mathcal{T}'}$ by the condition
  $\CR_{\mathcal{T}'}(C,\iota)=\pi_{E}(P).$ Thus $G$ is a birational
  inverse to $F$. As $\CR_{\mathcal{T}}$ is generically finite of
  degree $d_{\mathcal{T}}$, so is $\CR_{\mathcal{T}'}\circ G$, hence
  so is $\CR_{\mathcal{T}'}\circ G\circ F=\CR_{\mathcal{T}'}.$ That
  is, $d_{\mathcal{T}}=d_{\mathcal{T}'}$ as claimed.
\end{proof}

\begin{proof}[Proof of Theorem \ref{thm:Matchings}]

  Let $\mathcal{T},$ $V$, $E$, and $I$ be as before, and fix
  $v_1,v_2,v_3\in V.$ By Lemma \ref{lem:AddEdge}, we have
  $d_{\mathcal{T}}=d_{\mathcal{T}'},$ where $\mathcal{T}'$ has vertex
  and hyperedge sets $V'=V\cup\{n+1\}$ and $E'=E\cup\{e'\}$ (and
  incidence set $I'$), with $e'=\{v_1,v_2,v_3,n+1\}$. For each
  $v\in V',$ fix a general linear subvariety $Y_v\subseteq(\P^1)^{E'}$
  of type $E_v$, let $Z_v\subseteq\M_{\P^1,E'}$ be the associated
  incidence variety, and let $\bar{Z_v^{e'}}$ denote the Zariski
  closure of the image of $Z_v$ under the the embedding
  $\bar{\rho_{n+1}}:\M_{\P^1,E'}\into(\P^3)^{E'\setminus\{e'\}}=(\P^3)^E.$
  \begin{claim}\label{claim:UpperBound}
    We have $d_{\mathcal{T}}\le\prod_{v\in V'}[\bar{Z_v^{e'}}].$
  \end{claim}
  \begin{proof}[Proof of Claim \ref{claim:UpperBound}]
    Let $G=\PGL(4)^E,$ let
    $$\bar{\mathcal{Z}_v^{e'}}:=\{(g,g\cdot C)\in G\times(\P^3)^E:g\in\PGL(4),C\in\bar{Z_v^{e'}}\}$$ denote the
    universal translate of $\bar{Z_v^{e'}}$, let
    $\mathcal{Z}=\bigcap_{v\in V'}\bar{\mathcal{Z}_v^{e'}}$, and let
    $\epsilon:\mathcal{Z}\to G$ denote the projection. As $G$
    acts transitively on $(\P^3)^E$, Kleiman's Bertini theorem
    \cite{Kleiman1974} implies $\epsilon$ is generically finite of
    degree $\prod_{v\in V'}[\bar{Z_v^{e'}}].$ 
    
    Let $\mathcal{Z}'\subseteq\mathcal{Z}$ denote the Zariski closure of
    $\epsilon^{-1}(\eta),$ where $\eta\in G$ is the generic point. Thus
    $\mathcal{Z}'$ consists of all components of
    $\mathcal{Z}$ that map dominantly to $G.$ Note that
    $\epsilon|_{\mathcal{Z}'}$ is generically finite of degree
    $\prod_{v\in V'}[\bar{Z_v^{e'}}].$ By \cite[Prop.
    15.5.3]{EGAIVpart3}, for any $g\in G,$
    $\epsilon^{-1}(g)\cap\mathcal{Z}'$ has at most
    $\prod_{v\in V'}[\bar{Z_v^{e'}}]$ connected components.
    
    Note that the fiber $\epsilon^{-1}((\Id)_{e\in E})$ is precisely
    the intersection $\bigcap_{v\in V'}\bar{Z_v^{e'}}$. In particular,
    Theorem \ref{thm:TurnIntoCurveCounting} and genericity of the
    linear subvarieties $Y_v$ imply that
    $\M_{\P^1,E'}\cap\epsilon^{-1}((\Id)_{e\in E})$ is a finite set of
    exactly $d_{\mathcal{T}}$ reduced points
    $Q_1,\ldots,Q_{d_{\mathcal{T}}}.$ Thus if we show
    $Q_1,\ldots,Q_{d_{\mathcal{T}}}\in\mathcal{Z}'$, it will imply the
    Claim \ref{claim:UpperBound}.
    
    
    To see that $Q_1,\ldots,Q_{d_{\mathcal{T}}}\subseteq\mathcal{Z}'$,
    note that $\mathcal{Z}$ is the intersection of subvarieties of total
    codimension
    $$\sum_{v\in
      V'}(\abs{E'_v}-1)=\abs{I'}-\abs{V'}=3\abs{E}=\dim((\P^3)^E).$$
    Thus every irreducible component of $\mathcal{Z}$ has dimension at
    least $\dim(G),$ hence either maps dominantly to $G$, or maps to
    $G$ with positive relative dimension. If $\mathcal{Z}_0$ is a
    component of the latter type, upper semicontinuity of fiber
    dimension \cite[Thm. 13.1.3]{EGAIVpart3} implies that every
    fiber of $\epsilon|_{\mathcal{Z}_0}$ has positive dimension
    (locally at any point). We therefore must have
    $Q_i\not\in\mathcal{Z}_0$ for all $i$, as $Q_i$ is an isolated
    point of its fiber. We conclude
    $Q_1,\ldots,Q_{d_{\mathcal{T}}}\in\mathcal{Z}',$ and Claim
    \ref{claim:UpperBound} follows.
  \end{proof}

  Theorem \ref{thm:Matchings} now follows from:
  \begin{claim}\label{claim:Matchings}
    We have $\prod_{v\in V'}[\bar{Z_v^{e'}}]=P(G_{\mathcal{T}}-\{v_1,v_2,v_3\}).$
  \end{claim}
  \begin{proof}[Proof of Claim \ref{claim:Matchings}] We compute
    $\prod_{v\in V'}[\bar{Z_v^{e'}}]$ using Proposition
    \ref{prop:CalculateClass}, in the ring
    $\Z[\{H_e\}_{e\in E}]/\langle\{H_e^4\}_{e\in E}\rangle.$ As
    $\prod_{v\in V'}[\bar{Z_v^{e'}}]$ has codimension
    $\sum_{v\in V'}(\abs{E'_v}-1)=\abs{I'}-\abs{V'}=3\abs{E},$ and
    every monomial in the expansion of
    $\prod_{v\in V'}[\bar{Z_v^{e'}}]$ appears with coefficient 1 by
    Proposition \ref{prop:CalculateClass}, we must simply count
    occurences of $\prod_{e\in E}H_e^3$.
    
    The monomials in the expansion of
    $\prod_{v\in V'}[\bar{Z_v^{e'}}]$ are indexed by choices of a term
    of $\mathbf{e}_{\abs{E_v}-1}(\{H_e:e\in E_v'=E_v\})$ for each
    $v\in V'\setminus e'=V\setminus\{v_1,v_2,v_3\}$. As a term of
    $\mathbf{e}_{\abs{E_v}-1}(\{H_e:e\in E_v'=E_v\})$ contains all but
    one hyperplane class (and is therefore determined by the missing
    hyperplane class), there is a bijection $\phi$ from the set of
    terms in the expansion of $\prod_{v\in V'}[\bar{Z_v^{e'}}]$ to
    $\prod_{v\in V\setminus\{v_1,v_2,v_3\}}E_v.$ As each $e\in E$ has
    cardinality 4, such a monomial $m$ is equal to
    $\prod_{e\in E}H_e^3$ if and only if $\phi(m)$ contains each
    $e\in E$ exactly once, i.e. if and only if $\phi(m)$ determines a
    perfect matching $V\setminus\{v_1,v_2,v_3\}\to E.$
  \end{proof}
  This completes the proof of Theorem \ref{thm:Matchings}.
\end{proof}

\begin{ex}\label{ex:matchings}
  As in Example \ref{ex:CoordinateCalculation}, let $\mathcal{T}$ be
  the hypergraph with $V=\{1,\ldots,6\}$ and $E=\{e_1,e_2,e_3\},$
  where $e_1=\{1,2,3,4\},$ $e_2=\{1,2,5,6\},$ and $e_3=\{3,4,5,6\}.$
  Deleting the vertices $\{1,2,3\}$ yields the hypergraph with
  incidence graph shown on the left in Figure \ref{fig:matchings},
  with its two matchings shown on the right.
  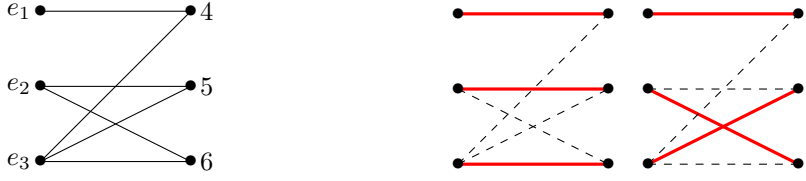
\begin{figure}
    \centering
    \begin{tikzpicture}
      \draw (0,0) node {$\bullet$} node[left] {$e_1$};
      \draw (0,-1) node {$\bullet$} node[left] {$e_2$};
      \draw (0,-2) node {$\bullet$} node[left] {$e_3$};
      \draw (2,0) node {$\bullet$} node[right] {4};
      \draw (2,-1) node {$\bullet$} node[right] {5};
      \draw (2,-2) node {$\bullet$} node[right] {6};
      \draw (0,0)--(2,0);
      \draw (0,-1)--(2,-1);
      \draw (0,-1)--(2,-2);
      \draw (0,-2)--(2,0);
      \draw (0,-2)--(2,-1);
      \draw (0,-2)--(2,-2);
    \end{tikzpicture}
    \quad\quad\quad\quad\quad\quad\quad\quad
    \begin{tikzpicture}
      \draw[very thick,red] (0,0)--(2,0);
      \draw[very thick,red] (0,-1)--(2,-1);
      \draw[dashed] (0,-1)--(2,-2);
      \draw[dashed] (0,-2)--(2,0);
      \draw[dashed] (0,-2)--(2,-1);
      \draw[very thick,red] (0,-2)--(2,-2);
      \draw (0,0) node {$\bullet$};
      \draw (0,-1) node {$\bullet$};
      \draw (0,-2) node {$\bullet$};
      \draw (2,0) node {$\bullet$};
      \draw (2,-1) node {$\bullet$};
      \draw (2,-2) node {$\bullet$};
    \end{tikzpicture}
    \begin{tikzpicture}
      \draw[very thick,red] (0,0)--(2,0);
      \draw[dashed] (0,-1)--(2,-1);
      \draw[very thick,red] (0,-1)--(2,-2);
      \draw[dashed] (0,-2)--(2,0);
      \draw[very thick,red] (0,-2)--(2,-1);
      \draw[dashed] (0,-2)--(2,-2);
      \draw (0,0) node {$\bullet$};
      \draw (0,-1) node {$\bullet$};
      \draw (0,-2) node {$\bullet$};
      \draw (2,0) node {$\bullet$};
      \draw (2,-1) node {$\bullet$};
      \draw (2,-2) node {$\bullet$};
    \end{tikzpicture}
    \caption{The incidence graph of Example \ref{ex:matchings} and its
      perfect matchings}
    \label{fig:matchings}
  \end{figure}
  Explicitly, these two matchings arise from the proof of Theorem
  \ref{thm:Matchings} as follows. Let $V'=V\cup\{7\}$ and
  $E'=E\cup\{e'\},$ where $e'=\{1,2,3,7\}$. We are intersecting incidence subvarieties
  $\bar{Z_1^{e'}},\ldots,\bar{Z_7^{e'}}\subseteq(\P^3)^3=(\P^3)^E$,
  with classes
  \begin{align*}
    [\bar{Z_1^{e'}}]=[\bar{Z_2^{e'}}]&=H_{e_1}H_{e_2}&
    [\bar{Z_3^{e'}}]&=H_{e_1}H_{e_3}&
    [\bar{Z_4^{e'}}]&=H_{e_1}+H_{e_3}\\
    [\bar{Z_5^{e'}}]=[\bar{Z_6^{e'}}]&=H_{e_2}+H_{e_3}&
    [\bar{Z_7^{e'}}]&=1.
  \end{align*}
  The product is
  \begin{align*}
  H_{e_1}^3H_{e_2}^2H_{e_3}&(H_{e_1}+H_{e_3})(H_{e_2}+H_{e_3})^2\\&=H_{e_1}^4 H_{e_2}^4 H_{e_3} + 2 H_{e_1}^4 H_{e_2}^3 H_{e_3}^2 + H_{e_1}^3 H_{e_2}^4 H_{e_3}^2 + H_{e_1}^4 H_{e_2}^2 H_{e_3}^3 + 
  2 H_{e_1}^3 H_{e_2}^3 H_{e_3}^3 + H_{e_1}^3 H_{e_2}^2 H_{e_3}^4.
  \end{align*}
  The monomial $H_{e_1}^3H_{e_2}^3H_{e_3}^3$ appears with coefficient
  2, so $d_{\mathcal{T}}\le2$ as we already knew. The left perfect
  matching
  $$(\phi^{-1}(4),\phi^{-1}(5),\phi^{-1}(6))=(e_1,e_2,e_3)$$ in Figure
  \ref{fig:matchings} corresponds to choosing the product of the terms
  $H_{e_3}$ in $[\bar{Z_4^{e'}}]$, $H_{e_3}$ in $[\bar{Z_5^{e'}}]$,
  and $H_{e_2}$ in $[\bar{Z_6^{e'}}]$. The other matching corresponds
  to choosing the product of the terms $H_{e_3}$ in
  $[\bar{Z_4^{e'}}]$, $H_{e_2}$ in $[\bar{Z_5^{e'}}]$, and $H_{e_3}$
  in $[\bar{Z_6^{e'}}]$.

  In fact, in this example the same bound arises from any choice of
  $v_1,v_2,v_3$.
\end{ex}

\begin{ex}\label{ex:NonzeroDiscrepancy}
  It is illuminating to see how the intersection calculation above can
  fail to give the exact answer for $d_{\mathcal{T}}$. This may happen
  even in small examples; we now describe the simplest case where it
  does happen. Let $\mathcal{T}$ be the hypergraph with
  $V=\{1,\ldots,6\}$ and $E=\{\{1,2,3,4\},\{1,2,3,4\}\}.$ We certainly
  have $d_{\mathcal{T}}=0$ as the map
  $\CR_{\mathcal{T}}:M_{0,5}\to(\C^*\setminus\{0,1\})^2$ factors
  through the diagonal. On the other hand,
  $G_{\mathcal{T}}\setminus\{3,4,5\}$ is the complete bipartite graph
  $K_{2,2}$, which has two perfect matchings.

  To see what is going on, we modify the hypergraph as in the proof of
  Theorem \ref{thm:Matchings}, letting $\mathcal{T}'$ have vertices
  $V'=V\cup\{6\}$ and hyperedges $E'=E\cup\{e'\},$ where
  $e'=\{3,4,5,6\}$. Then $\M_{\P^1,E'}$ is the space of
  multidegree-$(1,1,1)$ curves in $(\P^1)^3,$ and $d_{\mathcal{T}}=0$
  is the number of such curves passing through two general points
  $P_1,P_2$ and two vertical lines $L_1,L_2$. (One may see this
  directly by considering the projection onto the first two factors,
  where the constraints give a codimension-4 condition on the
  3-dimensional moduli space $\M_{\P^1,E}$). Without loss of
  generality we may choose $P_1=(0,0,0)$ and
  $P_2=(\infty,\infty,\infty)$. Under the embedding
  $\rho_{e'}:\M_{\P^1,E'}\to(\P^3)^E,$ the curves passing through
  $P_1$ and $P_2$ map to the matrices
  \begin{align*}
    \left(
    \begin{pmatrix}
      a&0\\
      0&b
    \end{pmatrix}
         ,
         \begin{pmatrix}
           c&0\\
           0&d
         \end{pmatrix}
              \right).
  \end{align*}
  The conditions that such a curve pass through $L_1$ and $L_2$ are of
  the forms $ad+\alpha_1bc=0$ and $ad+\alpha_2bc=0,$ where
  $\alpha_1\ne\alpha_2.$ Together these imply $bc=0$ and $ad=0$,
  giving two intersection points in $(\P^3)^E\setminus\M_{\P^1,E'}$:
  \begin{align*}
    &\left(
    \begin{pmatrix}
      1&0\\
      0&0
    \end{pmatrix}
         ,
         \begin{pmatrix}
           1&0\\
           0&0
         \end{pmatrix}
              \right)&&\text{and}&\left(
    \begin{pmatrix}
      0&0\\
      0&1
    \end{pmatrix}
         ,
         \begin{pmatrix}
           0&0\\
           0&1
         \end{pmatrix}
              \right).
  \end{align*}
  These two points contribute to the intersection product, but not to
  $d_{\mathcal{T}}.$ Note that the incidence subvarieties may be
  deformed in $(\P^3)^{E}$ (e.g. via the $\PGL(4)^E$ action) so that
  the intersection lies inside $\M_{\P^1,E}$; however, the moved
  subvarieties will no longer be incidence subvarieties for this
  enumerative problem.
\end{ex}

\begin{rem}
  It would be interesting to know whether Theorem \ref{thm:Matchings}
  can be proved directly from \cite{Goldner2021}.
\end{rem}

\section{Some analysis of the upper bound}\label{sec:Analysis}
Recall that for a bipartite graph $G=(A\sqcup B,I),$ the
\emph{neighborhood} of a subset $A'\subseteq A$ is
$$\mathcal{N}(A')=\mathcal{N}_G(A')=\{b\in B:\text{$(a,b)\in I$ for some $a\in A'$}\},$$
and the \emph{surplus} \cite[Sec. 1.3]{LovaszPlummer2009} of $G$ (with
respect to $A$) is
$$\sigma(G):=\min_{\substack{A'\subseteq
    A\\A'\ne\emptyset}}(\abs{\mathcal{N}(A')}-\abs{A'}).$$ Note that
$\sigma(G_{\mathcal{T}})\le 3$ as $\mathcal{T}$ is 4-uniform. Also,
recall Hall's Theorem on matchings, which states that $G$ has a
perfect matching if and only if $\sigma(G)\ge0.$
\begin{prop}\label{prop:SurplusUpperBound}
  The upper bound
  $\min_{v_1,v_2,v_3}P(G_{\mathcal{T}}-\{v_1,v_2,v_3\})$ is nonzero if
  and only if $\sigma(G_{\mathcal{T}})=3$.
\end{prop}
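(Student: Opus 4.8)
The plan is to read off the proposition from Hall's theorem. Recall that $\sigma(G_{\mathcal{T}})=\min_{\emptyset\ne E'\subseteq E}(\abs{\mathcal{N}_{G_{\mathcal{T}}}(E')}-\abs{E'})$ and that this is always at most $3$, taking $E'$ to be a single hyperedge (which has exactly four vertices). For any three distinct $v_1,v_2,v_3\in V$, write $H=G_{\mathcal{T}}-\{v_1,v_2,v_3\}$; both partite sets of $H$ have size $n-3$, so by Hall's theorem $H$ has a perfect matching if and only if its surplus (with respect to the $E$-side) is nonnegative, i.e. if and only if $\abs{\mathcal{N}_{G_{\mathcal{T}}}(E')\setminus\{v_1,v_2,v_3\}}\ge\abs{E'}$ for every nonempty $E'\subseteq E$.

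For the first implication, suppose $\sigma(G_{\mathcal{T}})=3$ and fix distinct $v_1,v_2,v_3\in V$. Deleting three vertices can remove at most three elements from any neighborhood, so for every nonempty $E'\subseteq E$ we have $\abs{\mathcal{N}_{G_{\mathcal{T}}}(E')\setminus\{v_1,v_2,v_3\}}\ge\abs{\mathcal{N}_{G_{\mathcal{T}}}(E')}-3\ge(\abs{E'}+3)-3=\abs{E'}$. Hence $H=G_{\mathcal{T}}-\{v_1,v_2,v_3\}$ has a perfect matching, so $P(G_{\mathcal{T}}-\{v_1,v_2,v_3\})\ge1$; as the triple was arbitrary, $\min_{v_1,v_2,v_3}P(G_{\mathcal{T}}-\{v_1,v_2,v_3\})\ge1$.

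For the converse I would argue by contrapositive. Suppose $\sigma(G_{\mathcal{T}})\ne3$; since the surplus is always at most $3$, this means $\sigma(G_{\mathcal{T}})\le2$, so there is a nonempty $E'\subseteq E$ with $\abs{\mathcal{N}_{G_{\mathcal{T}}}(E')}\le\abs{E'}+2$. Because $E'$ contains a hyperedge and every hyperedge has four vertices, $\abs{\mathcal{N}_{G_{\mathcal{T}}}(E')}\ge4$, so we may choose three distinct vertices $v_1,v_2,v_3\in\mathcal{N}_{G_{\mathcal{T}}}(E')$. Then $\abs{\mathcal{N}_{G_{\mathcal{T}}}(E')\setminus\{v_1,v_2,v_3\}}=\abs{\mathcal{N}_{G_{\mathcal{T}}}(E')}-3\le\abs{E'}-1<\abs{E'}$, so Hall's condition fails for $H=G_{\mathcal{T}}-\{v_1,v_2,v_3\}$ and $P(H)=0$. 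Hence $\min_{v_1,v_2,v_3}P(G_{\mathcal{T}}-\{v_1,v_2,v_3\})=0$, which is the contrapositive.

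There is no genuinely difficult step: the whole argument is bookkeeping with Hall's theorem. The two points to be careful about are that one is allowed to test Hall's condition only on the $E$-side --- legitimate precisely because $G_{\mathcal{T}}-\{v_1,v_2,v_3\}$ is balanced --- and that a nonempty family of hyperedges always has at least four vertices in its neighborhood, which leaves enough room to delete three vertices inside the witnessing neighborhood in the contrapositive direction.
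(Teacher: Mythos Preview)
Your proof is correct and follows essentially the same route as the paper's own argument: both directions are deduced from Hall's theorem, choosing the three deleted vertices inside the witnessing neighborhood when $\sigma(G_{\mathcal{T}})<3$. Your write-up is in fact slightly more careful, explicitly noting that $\abs{\mathcal{N}_{G_{\mathcal{T}}}(E')}\ge4$ so that three distinct vertices can be chosen there, a point the paper leaves implicit.
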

\begin{proof}
  Suppose $\sigma(G_{\mathcal{T}})<3$. Then there exists a nonempty
  subset $E'\subseteq E$ such that $\abs{\mathcal{N}(E')}<k+3.$ Let
  $v_1,v_2,v_3\in\mathcal{N}(E')$ be distinct. Then in
  $G_{\mathcal{T}}-\{v_1,v_2,v_3\},$ we have
  $\abs{\mathcal{N}(E')}<\abs{E'},$ so by (the easy direction of)
  Hall's theorem, $P(G_{\mathcal{T}}-\{v_1,v_2,v_3\})=0$.

  Now suppose $\sigma(G_{\mathcal{T}})=3$, and fix $v_1,v_2,v_3\in V.$
  For any $E'\subseteq E,$ we have $\mathcal{N}(E')\ge\abs{E'}+3$ in
  $G_{\mathcal{T}}$, so $\mathcal{N}(E')\ge\abs{E'}$ in
  $G_{\mathcal{T}}-\{v_1,v_2,v_3\}.$ By Hall's Theorem,
  $P(G_{\mathcal{T}}-\{v_1,v_2,v_3\})>0$.
\end{proof}


\begin{question}\label{question:SurplusNonzero}
  Proposition \ref{prop:SurplusUpperBound} implies that if
  $\sigma(G_{\mathcal{T}})<3$, then $d_{\mathcal{T}}=0.$ Does the
  converse hold? That is, if $\sigma(G_{\mathcal{T}})=3$, is
  $d_{\mathcal{T}}$ necessarily nonzero? Experimentally, this appears
  true. The special case where $\bigcap_{e\in E}e\ne\emptyset$ follows
  from \cite{CastravetTevelev2013}, see Section \ref{sec:Hypertrees}.
\end{question}

\begin{rem}
  One obtains a very simple upper bound for
  $P(G_{\mathcal{T}}-\{v_1,v_2,v_3\})$, and thus for
  $d_{\mathcal{T}}$, via the Bregman-Minc inequality, which in this
  case says
  \begin{align}\label{eq:BregmanMinc}
  P(G_{\mathcal{T}}-\{v_1,v_2,v_3\})\le\prod_{e\in
    E}(\abs{e\setminus\{v_1,v_2,v_3\}}!)^{1/\abs{e\setminus\{v_1,v_2,v_3\}}}.
  \end{align}
  For example, in Example \ref{ex:matchings}, the bound
  \eqref{eq:BregmanMinc} is
  $(1!)^{1/1}\cdot(2!)^{1/2}\cdot(3!)^{1/3}\approx2.5698$. This is
  sharp in this case in the sense that $d_{\mathcal{T}}=2$, but
  \eqref{eq:BregmanMinc} appears to be very far from sharp in
  general. Note also that~\eqref{eq:BregmanMinc} and
  $\abs{e\setminus\{v_1,v_2,v_3\}}\le4$ imply the (very loose) uniform
  bound $d_{\mathcal{T}}\le 24^{(n-3)/4}=(2.21336\ldots)^{n-3}.$ In
  fact, the approach via intersection theory on $\Mbar_{0,n}$
  mentioned in Section \ref{sec:IntersectionTheory} can be used to
  slightly strengthen this uniform bound to
  $d_{\mathcal{T}}\le2^{n-5}$ for $n\ge5$.
\end{rem}

\subsection*{Acknowledgements} I am grateful to Rohini Ramadas, Melody
Chan, and David Speyer for helpful discussions, to Christoph Goldner
for explaining the context behind \cite{Goldner2021}, and to Ana-Maria
Castravet for pointing out the connection to
\cite{CastravetTevelev2013}. This project was supported by NSF Grant
DMS-1645877, and by a Zelevinsky Postdoctoral Fellowship at
Northeastern University.

\bibliographystyle{alpha} \bibliography{../../research.bib}

\end{document}